\newtheorem{theorem}{Theorem}
\newtheorem{lemma}{Lemma}
\newtheorem{definition}{Definition}
\newtheorem{corollary}{Corollary}
\newtheorem{proposition}{Proposition}
\newtheorem{remark}{Remark}
\newcommand{\mC}{\mathbb{C}}
\newcommand{\mZ}{\mathbb{Z}}
\newcommand{\mF}{\mathbb{F}}
\newcommand{\mV}{\mathbb{V}}
\newcommand{\mW}{\mathbb{W}}
\newcommand{\mg}{\mathfrak{g}}
\newcommand{\ma}{\mathfrak{a}}
\newcommand{\Mp}{\mathfrak{p}}
\newcommand{\mn}{\mathfrak{n}}
\newcommand{\mb}{\mathfrak{b}}
\newcommand{\cC}{\mathcal{C}}
\newcommand{\cO}{\mathcal{O}}
\newcommand{\cU}{\mathcal{U}}
\newcommand{\cJ}{\mathcal{J}}
\begin{document}
\title{Bernstein-Gelfand-Gelfand resolutions for linear superalgebras}

\author{K.\ Coulembier\thanks{Postdoctoral Fellow of the Research Foundation - Flanders (FWO), E-mail: {\tt Coulembier@cage.ugent.be}} }

\date{\small{Department of Mathematical Analysis\\
Faculty of Engineering and Architecture -- Ghent University\\ Krijgslaan 281, 9000 Gent,
Belgium}\\
}

\maketitle

\begin{abstract}
In this paper we construct resolutions of finite dimensional irreducible $\mathfrak{gl}(m|n)$-modules in terms of generalized Verma modules. The resolutions are determined by the Kostant cohomology groups and extend the strong (Lepowsky-)Bernstein-Gelfand-Gelfand resolutions to the setting of Lie superalgebras. It is known that such resolutions for finite dimensional representations of Lie superalgebras do not exist in general. Thus far they have only been discovered for $\mathfrak{gl}(m|n)$ in case the parabolic subalgebra has reductive part equal to $\mathfrak{gl}(m)\oplus\mathfrak{gl}(n)$ and for tensor modules. In the current paper we prove the existence of the resolutions for tensor modules of $\mathfrak{gl}(m|n)$ or $\mathfrak{sl}(m|n)$ and their duals for an extensive class of parabolic subalgebras including the ones already considered in the literature. 
\end{abstract}

\textbf{MSC 2010 :} 17B10, 17B55, 58J10, 53A55\\
\noindent
\textbf{Keywords : }  BGG resolutions, Kostant cohomology, linear superalgebra, Verma module, coinduced module

\section{Introduction and overview of main results}
\label{intro}

The BGG resolutions are resolutions of representations of Lie algebras in terms of (generalized) Verma modules, see \cite{MR0578996, lepowsky}. In \cite{MR0578996} this was applied to obtain a simple proof of Bott's theorem. These resolutions and their corresponding morphisms between Verma modules have an interesting dual side in terms of invariant differential operators, see \cite{MR1856258, Cap, MR2861216}. These differential operators have applications in many areas, see references in \cite{MR1856258, Cap, MR2861216} or \cite{MR2180410} and \cite{MR2372762} for concrete applications. Such resolutions have also already been constructed for Kac-Moody algebras in \cite{MR066169} and for some infinite dimensional Lie superalgebras in \cite{MR2646304}. They are also known to exist for unitary infinite dimensional representations for orthosymplectic superalgebras, see \cite{ChLW}. In Section \ref{classical} we give a brief historical overview of the development of BGG resolutions and the corresponding differential operators, which is relevant to explain the approach taken in the current paper. 

In \cite{Cheng} some interesting results on BGG-resolutions for Lie superalgebras were obtained. First of all, the classical results can not be obtained in full generality since the natural module for $\mathfrak{gl}(1|2)$ or $\mathfrak{sl}(1|2)$ does not have a resolution in terms of Verma modules (with parabolic subalgebra equal to the Borel subalgebra). However, it was proven that every tensor module of $\mathfrak{gl}(m|n)$ (covariant module, see \cite{MR0884183}) has a resolution in terms of Kac modules. The Kac modules are (generalized) Verma modules for which the parabolic subalgebra has reductive subalgebra equal to the underlying Lie algebra $\mathfrak{gl}(m)\oplus\mathfrak{gl}(n)$. An essential method used in \cite{Cheng} is the so-called super duality. This gives a correspondence between properties of the Lie superalgebra $\mathfrak{gl}(m|n)$ and the Lie algebra $\mathfrak{gl}(m+n)$ in the limit $n\to\infty$. This method has already led to great success in determining e.g. character formulas and cohomology groups, see \cite{MR1937204, MR2646304, Cheng, ChLW, MR2037723, MR2036954}.

In the current paper we considerably extend the result of \cite{Cheng} to the statement that every tensor module of $\mathfrak{gl}(m|n)$ has a resolution in terms of Verma modules for every parabolic subalgebra containing $\mathfrak{gl}(n)$, naturally embedded in $\mathfrak{gl}(m|n)$. The case in \cite{Cheng} is included in this statement. We also obtain an analogous result for the dual representations of tensor modules (contravariant modules), see Theorem \ref{mainthmgl}. To obtain these results we do not use the machinery of super duality but a reformulation in terms of dual representations of Verma modules (coinduced modules). Then we proceed inspired by the results on the differential operator side in the classical (curved) case in \cite{MR1856258, Cap}. The methods that we use lead to invariant differential operators on super parabolic geometries, but in this paper the material is written in an algebraical way. The extension to differential operators is mainly a matter of definitions and introduction of Lie supergroups and will therefore be included in a forthcoming paper with other results on such invariant differential operators. We hope that this method can have other applications in the study of the parabolic category $\cO$ for Lie superalgebras. In particular it can be expected that the parabolic category $\cO$ corresponding to the parabolic subalgebras we study possesses a rich structure. This is also already obvious from Theorem \ref{Thmquabla} and Corollary \ref{submodVerma}.

In this paper $\mg=\mg_{\overline{0}}\,+\,\mg_{\overline{1}}$ will always stand for a finite dimensional complex semisimple Lie superalgebra and $\Mp$ a parabolic subalgebra (a subalgebra containing a Borel subalgebra of $\mg$). The reductive subalgebra of $\Mp$ is denoted by $\mg_0$, not to be confused with $\mg_{\overline{0}}$, the underlying Lie algebra of $\mg$. The parabolic subalgebra then has a vector space decomposition $\Mp=\mg_0+\mn$ and $\mg$ decomposes likewise as $\mg=\overline{\mn}+\mg_0+\mn$. We also use the notation $\Mp^\ast=\overline{\mn}+\mg_0$. The notations $\mV,\mW$ will be used for finite dimensional irreducible $\mg$-modules. If we want to mention explicitly the highest weight $\lambda$ of the representation it is denoted by $\mV_\lambda$. The notation $[\cdot,\cdot]_\ma$ will always stand for the projection of the Lie superbracket onto a subspace $\ma\subset\mg$ with respect to a naturally defined complement space $\mb$, $\mg=\ma+\mb$. 

The main result of this paper is that there exists a resolution of the $\mg$-module $\mV^\ast$ (dual representation of $\mV$) in terms of parabolic Verma modules with respect to $\Mp$ if the triple $(\mg,\Mp,\mV)$ satisfies the conditions:
\begin{equation}
\label{neccond}
\begin{cases}\circ&\mV\mbox{ is a star representation of }\mg \mbox{ with star map } \dagger \\
\circ&\mbox{There is a weight basis }\{X_a\}\mbox{ of }\mn\mbox{ such that the bilinear form } (\cdot,\cdot): \overline{\mn}\times\mn\to\mC\\
&\mbox{given by }(X_a^\dagger,X_b)=\delta_{ab}\mbox{ satisfies }\left([Y,A]_{\overline{\mn}},X\right)=\left(Y,[A,X]_{\mn}\right)\mbox{ for all }X\in\mn,\,Y\in\overline{\mn}\mbox{ and }A\in\mg.\end{cases}
\end{equation}

For completeness we recall the notation of a star representation for a Lie (super)algebra, see \cite{MR0424886}. A star Lie superalgebra is equipped with a map $\dagger$ which is antilinear, even and satisfies $[A,B]^\dagger=[B^\dagger,A^\dagger]$ and $\left(A^\dagger\right)^\dagger=A$ for $A,B\in\mg$. A star representation of such an algebra is a representation with a (positive definite, hermitian) inner product which satisfies $(Av,w)=(v,A^\dagger w)$ for $v,w\in\mV$ and $A\in\mg$. The only simple finite dimensional Lie superalgebras (excluding Lie algebras) which have finite dimensional star representations are $\mathfrak{sl}(m|n)$, $\mathfrak{osp}(2|2n)$ and $Q(n)$, see \cite{MR0424886}. The properties \eqref{neccond} are quite restrictive but still have interesting classes of examples, as will be shown in Section \ref{ExamGL}. In particular we obtain classes which include the case in \cite{Cheng} but are much larger. In case $\mg$ is a Lie algebra, properties \eqref{neccond} always hold. This follows from the compact real form, which has the invariant negative definite Killing form and for which representations are unitary. 

The combination of the results in the subsequent Theorem \ref{tensor} and Theorem \ref{finalthm} lead to the main theorem of this paper. The notion of tensor modules will be explained in Section \ref{ExamGL}, see also \cite{MR0884183}, and we consider the natural block-diagonal embedding $\mathfrak{g}_{\overline{0}}=\mathfrak{gl}(m)\oplus\mathfrak{gl}(n)\hookrightarrow\mathfrak{gl}(m|n)$.
\begin{theorem}
\label{mainthmgl}
For $\mg=\mathfrak{gl}(m|n)$ with a parabolic subalgebra $\Mp$, the $\mathfrak{g}$-module $\mV$ can be resolved in terms of direct sums of $\Mp$-Verma modules if
\begin{itemize}
\item $\mV$ is a tensor module and $\Mp$ contains $\mathfrak{gl}(n)$
\item $\mV$ is the dual of a tensor module and $\Mp$ contains $\mathfrak{gl}(m)$.
\end{itemize}
\end{theorem}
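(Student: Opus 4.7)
The plan is to deduce Theorem~\ref{mainthmgl} by combining the general existence result for the resolution (Theorem~\ref{finalthm}) --- which will produce a resolution of $\mW^\ast$ whenever a triple $(\mg, \Mp, \mW)$ satisfies the conditions \eqref{neccond} --- with the verification (Theorem~\ref{tensor}) that these conditions are in fact satisfied in the two situations listed. Since Theorem~\ref{finalthm} resolves the \emph{dual} of its input, to resolve a tensor module $\mV$ in the first bullet one should apply it with $\mW = \mV^\ast$ (a dual of a tensor module), and symmetrically for the second bullet; so in each case the content of the theorem is a verification of \eqref{neccond} for the input $\mV^\ast$ with the prescribed parabolic.

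The verification of \eqref{neccond} splits naturally in two parts. First, I would show that every tensor module of $\mathfrak{gl}(m|n)$, and likewise every dual of a tensor module, is a star representation with respect to the compact star structure $\dagger$ on $\mathfrak{sl}(m|n)$ recorded in \cite{MR0424886}. This is essentially classical: tensor modules are direct summands of tensor powers of the defining representation, which carries a natural positive-definite Hermitian form compatible with $\dagger$, and duals of star representations are again star representations.

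Second and more delicate is the bilinear form condition. The key observation is that when $\Mp \supset \mathfrak{gl}(n)$ (respectively $\mathfrak{gl}(m)$), the reductive part $\mg_0$ sits entirely inside $\mg_{\overline{0}} = \mathfrak{gl}(m) \oplus \mathfrak{gl}(n)$, so no odd root space of $\mg$ is absorbed into $\mg_0$. Combined with the fact that $\dagger$ interchanges opposite root spaces, this implies that $\dagger$ maps $\mn$ bijectively onto $\overline{\mn}$ root-space by root-space; one can then choose a weight basis $\{X_a\}$ of $\mn$ whose $\dagger$-images form the prescribed dual basis of $\overline{\mn}$. The required identity $([Y,A]_{\overline{\mn}}, X) = (Y, [A,X]_{\mn})$ then follows from the antihomomorphism property $[A,B]^\dagger = [B^\dagger, A^\dagger]$ together with the $\dagger$-stability of the triangular decomposition $\mg = \overline{\mn} + \mg_0 + \mn$.

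I expect the principal obstacle to lie in this second part: one must check that the parabolic constraint $\Mp \supset \mathfrak{gl}(n)$ (or $\mathfrak{gl}(m)$) is \emph{exactly} what is needed to avoid configurations in which an odd root space of $\mg$ meets $\mg_0$ non-trivially, which would destroy the adjointness between the projected brackets $[\cdot, \cdot]_{\mn}$ and $[\cdot, \cdot]_{\overline{\mn}}$ and prevent the star-induced pairing from being diagonalisable by a weight basis. Once both parts are in place, Theorem~\ref{mainthmgl} follows in one line by quoting Theorem~\ref{finalthm} for the triple $(\mg, \Mp, \mV^\ast)$.
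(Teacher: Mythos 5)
Your overall strategy is the paper's: Theorem~\ref{mainthmgl} is obtained by combining the existence result of Theorem~\ref{finalthm} with the verification in Theorem~\ref{tensor} that conditions \eqref{neccond} hold. There is a small labeling slip in your first paragraph: Theorem~\ref{finalthm} resolves the module $\mW$ that satisfies \eqref{neccond2} (equivalently, whose dual satisfies \eqref{neccond}), so to resolve $\mV$ one sets $\mW=\mV$ there, not $\mW=\mV^\ast$; but you arrive at the correct upshot, namely that \eqref{neccond} must be verified for $(\mg,\Mp,\mV^\ast)$.

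The real problem is the ``key observation'' you offer for the bilinear-form condition. You claim that $\Mp\supset\mathfrak{gl}(n)$ (resp.\ $\mathfrak{gl}(m)$) forces $\mg_0\subset\mg_{\overline{0}}$ and hence that no odd root space is absorbed into $\mg_0$. This is false, and the paper explicitly advertises the opposite: the result covers $\mathfrak{gl}(p+q|n)$ with $\mg_0=\mathfrak{gl}(p)\oplus\mathfrak{gl}(q|n)$, and for $\mathfrak{gl}(m|1)$ there is \emph{no} restriction on $\Mp$ at all, so $\mg_0$ can certainly contain odd root spaces. The actual mechanism (Remark~\ref{Killing} and the proof of Theorem~\ref{tensor}) is about sign constancy of the normalized Killing form $(x_k\partial_{x_l},x_i\partial_{x_j})=(-1)^{[k]}\delta_{kj}\delta_{li}$ once restricted to $\overline{\mn}\times\mn$: the hypothesis $\Mp\supset\mathfrak{gl}(m)$ (for $\mV$ a tensor module) ensures every $x_i\partial_{x_j}\in\mn$ has $j>m$, so $[j]=[k]=\overline{1}$ throughout and the sign $(-1)^{[k]}$ is constant. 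This makes the $\dagger$-induced pairing agree, up to a global rescaling, with the restriction of the Killing form, and the adjointness identity $([Y,A]_{\overline{\mn}},X)=(Y,[A,X]_{\mn})$ is then inherited from Killing-form invariance --- not from any evenness of $\mg_0$. Your proposed route would prove Theorem~\ref{tensor} only for a strictly smaller class of parabolics, so it cannot serve as a justification of the cited theorem; citing Theorem~\ref{tensor} directly, as in your first paragraph, is the way to go.
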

The explicit form of the resolution can be found in Theorem \ref{finalthm}.

Theorem \ref{mainthmgl} clearly includes the case where $\mV$ is a tensor module and $\mg_0=\mathfrak{gl}(m)\oplus\mathfrak{gl}(n)$, for which the BGG resolutions were constructed in \cite{Cheng}, although we use completely different methods. It also includes the interesting case $\mathfrak{gl}(p+q|n)$ with $\mg_0=\mathfrak{gl}(p)\oplus\mathfrak{gl}(q|n)$ for which the Kostant cohomology was investigated for infinite dimensional representations in \cite{MR2037723}. However, Theorem \ref{mainthmgl} also includes many cases where $\mg_{\overline{0}}$ is not contained in $\mg_0$. In particular for $\mg=\mathfrak{gl}(m|1)$ and $\mV$ a tensor module, there is no restriction on the parabolic subalgebra.

Note that this theorem does not contradict the counterexample of Verma module resolutions given in Example 5.1 in \cite{Cheng}, where it is proven that the natural module $\mC^{1|2}$ of $\mathfrak{gl}(1|2)$ does not have such a resolution for the parabolic subalgebra equal to the Borel subalgebra. Theorem \ref{mainthmgl} does state that the $\mathfrak{gl}(m|1)$-module $\mC^{m|1}$ and the $\mathfrak{gl}(1|n)$-module $\left(\mC^{1|n}\right)^\ast$ have resolutions for $\Mp$ equal to the Borel subalgebra. This shows why the counterexample in \cite{Cheng} is the easiest possible counterexample and thus that already the easiest case not included in Theorem \ref{mainthmgl} can not be used to extend the result.

The remainder of the paper is organized as follows. In Section \ref{classical} we give an overview of the classical results on BGG resolutions. In Section \ref{ExamGL} we prove that $\mathfrak{gl}(m|n)$ leads to a class of cases on which our general approach is applicable. In Section \ref{Kostant} we recall some notions on Kostant cohomology for Lie superalgebras in a setting that will be useful for the sequel and we derive some explicit formulae for the relevant operators. In particular we obtain a useful harmonic theory with a Kostant quabla operator. We prove that as in the classical case this operator can be expressed in terms of Casimir operators. In Section \ref{BGG} we derive the BGG-type resolutions by constructing the dual picture on coinduced modules. We construct a coresolution corresponding to a local twisted de Rham sequence. Using the obtained properties of the Kostant cohomology we can then derive a resolution in terms of Verma modules. We also discuss some immediate consequences of the resolutions. Finally we include an Appendix where we obtain relevant information to calculate the cohomology of the twisted de Rham sequence, using Hopf algebraical techniques.

\section{Lepowsky BGG resolutions and invariant differential operators}
\label{classical}
In \cite{MR0578996}, Bernstein, Gel'fand and Gel'fand proved that each finite dimensional irreducible representation of a complex semisimple Lie algebra has a resolution in terms of Verma modules. In \cite{lepowsky} Lepowsky proved that this extends to the parabolic case. To state this result we need to introduce the notation $M(\lambda)$ for the irreducible $\Mp$-representation with highest weight $\lambda\in\mathfrak{h}^\ast$ with $\mathfrak{h}$ the Cartan subalgebra of $\mg$. For the $\mg$-representation $\mV_\lambda$ there is an exact sequence 
\begin{eqnarray*}
0\rightarrow\bigoplus_{w\in W^1(\dim\mn)}V^{M(w\cdot\lambda)} \to\cdots\to \bigoplus_{w\in W^1(j)}V^{M(w\cdot\lambda)}\to\cdots\to \bigoplus_{w\in W^1(1)}V^{M(w\cdot\lambda)}\to V^{M(\lambda)} \to \mV_\lambda\to0
\end{eqnarray*}
with $W^1$ the parabolic Weyl group (with $\rho$-shifted action) and $V^M$ the parabolic Verma module generated by $M$, see \cite{lepowsky} for details.

In \cite{MR0142696}, Kostant investigated a certain (co)homology, which will be explained in Section \ref{Kostant}. The main result there was that the relevant homology groups satisfy
\begin{eqnarray*}
H^j(\mn,\mV)&\cong& \bigoplus_{w\in W^1(j)}M(w\cdot\lambda)
\end{eqnarray*}
as $\Mp$-modules. The result of Lepowsky can therefore be rewritten in terms of these cohomology groups.

There is a well-known correspondence between Verma module morphisms and differential operators between the principal series representations, corresponding to vector bundles on the generalized flag manifolds $G/P$ with $G$ and $P$ groups with Lie algebras $\mg$ and $\Mp$. This implies that there is a locally exact sequence
\begin{eqnarray*}
&&0\quad\to\quad \mV^\ast\quad\to\quad\Gamma(G/P,G\times_PH^0(\overline{\mn},\mV^\ast))\quad\to\quad\Gamma(G/P,G\times_PH^1(\overline{\mn},\mV^\ast))\quad\to\,\cdots\\
&&\cdots\,\to\quad\Gamma(G/P,G\times_PH^{\dim \mn}(\overline{\mn},\mV^\ast))\quad\to \quad0.
\end{eqnarray*}
One of the interesting features of this result is that each irreducible representation $\mV^\ast$ can be explicitly realized as the kernel of some set of differential operators.

In \cite{Cap} \v{C}ap, Slov\'ak and Sou{\v{c}}ek proved that the differential operators in the sequence above extend to curved Cartan-geometries bases on $G/P$, known as parabolic geometries. In doing so they also provided a new proof of the BGG resolutions for the flat model $G/P$. It turns out that the approach using differential operators extends more easily to the supersetting than the direct proof of the BGG resolutions. In this paper we use methods inspired by the simplification of \cite{Cap} provided in \cite{MR1856258} to prove BGG resolutions for the Lie superalgebra $\mathfrak{gl}(m|n)$. Even though some of the machinery is inspired by the classical differential operator side we will formulate and prove everything in a purely algebraic setting in the current paper.

\section{General linear superalgebra and tensor product representations}
\label{ExamGL}

In this section we prove that we have an extensive class of representations and parabolic subalgebras of $\mathfrak{gl}(m|n)$ that satisfy the conditions \eqref{neccond}. Working with $\mathfrak{gl}(m|n)$ or its simple subalgebra $\mathfrak{sl}(m|n)$ makes no difference, so Theorem \ref{tensor} also holds for $\mathfrak{sl}(m|n)$. The representations correspond to tensor modules or their duals. So even though the conditions are restrictive, the preserved class of cases is worth studying and, as will become apparent, includes all cases for which the BGG resolutions (for finite dimensional representations) are already known to exist, see \cite{Cheng}. The dual of a representation $\mV$ is defined as the space of linear functionals $\mV^\ast$ with action of $\mg$ given by $(A\alpha)(v)=-(-1)^{|A||\alpha|}\alpha(Av)$ for $A\in\mg$, $\alpha\in\mV^\ast$ and $v\in\mV$.

The natural module $\mC^{m|n}$ of $\mathfrak{gl}(m|n)$ or $\mathfrak{sl}(m|n)$ is a star representation. Therefore all its tensor powers are completely reducible. All modules appearing as submodules of these tensor powers are called tensor modules. The highest weights of these modules are described in \cite{Cheng, MR2037723, MR2036954}. They can be obtained by the superduality between $\mathfrak{gl}(m|\infty)$ and $\mathfrak{gl}(m+\infty)$. They are also easily described in terms of Hook Young diagrams, see \cite{MR0884183}.

Since the exterior powers do not have a top form for $\mC^{m|n}$ the module $\left(\mC^{m|n}\right)^\ast$ does not appear as a tensor module contrary to the classical case. This module also corresponds to a star-representation but this star map is different than the one from $\mC^{m|n}$, see the subsequent Proposition \ref{stardual}. So all duals of tensor modules appear as submodules of the tensor powers of $\left(\mC^{m|n}\right)^\ast$.

\begin{theorem}
\label{tensor}
The triple $(\mathfrak{gl}(m|n),\Mp,\mV)$ for $\Mp$ a parabolic subalgebra of $\mathfrak{gl}(m|n)$ satisfies conditions \eqref{neccond} if
\begin{itemize}
\item $\mV$ is a tensor module and $\Mp$ contains $\mathfrak{gl}(m)$
\item $\mV^\ast$ is a tensor module and $\Mp$ contains $\mathfrak{gl}(n)$.
\end{itemize}
\end{theorem}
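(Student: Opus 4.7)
The plan is to construct an explicit star map and a suitable bilinear form for each of the two cases. For the first case ($\mV$ a tensor module, $\Mp\supset\mathfrak{gl}(m)$), I would first observe that the natural module $\mC^{m|n}$ carries a positive-definite hermitian inner product under which the matrix operation $E_{ij}\mapsto E_{ji}$, extended antilinearly, is a legitimate star map $\dagger$ on $\mathfrak{gl}(m|n)$: the identity $[A,B]^\dagger=[B^\dagger,A^\dagger]$ follows from $(AB)^\dagger=B^\dagger A^\dagger$ together with the super-sign in the Lie superbracket. Since the class of star representations is closed under tensor products and under passage to submodules, every tensor module inherits such an inner product and $\dagger$ is its star map, yielding the first half of condition \eqref{neccond}.

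For the pairing, I would take (up to a global sign) the supertrace form $\mathrm{str}(AB)$ on $\mg$. This form is $\mg$-invariant and supersymmetric, and a direct calculation shows that under the parabolic decomposition $\mg=\nbar\oplus\mg_0\oplus\mn$ one has $\mg_0\perp(\mn\oplus\nbar)$, $\mn\perp\mn$ and $\nbar\perp\nbar$. Hence $\mathrm{str}$ restricts to a non-degenerate pairing of $\nbar$ with $\mn$, and the required invariance $([Y,A]_{\nbar},X)=(Y,[A,X]_{\mn})$ falls out by splitting the brackets $[Y,A]$ and $[A,X]$ along this orthogonal decomposition and invoking $\mg$-invariance of $\mathrm{str}$.

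The third step is to verify the normalization $(X_a^\dagger,X_b)=\delta_{ab}$. Taking the weight basis $\{X_a\}$ of $\mn$ to be the matrix units $E_{ij}\in\mn$, a short computation gives $\mathrm{str}(E_{ji}E_{kl})=\delta_{ik}\delta_{jl}(-1)^{|j|}$. Here the hypothesis $\Mp\supset\mathfrak{gl}(m)$ is decisive: it forces the block of the parabolic partition of $\{1,\dots,m+n\}$ containing $\{1,\dots,m\}$ to be the first block, so that for every $E_{ij}\in\mn$ the column index $j$ lies in $\{m+1,\dots,m+n\}$ and hence has odd parity. The sign $(-1)^{|j|}$ is then the constant $-1$, and $-\mathrm{str}$ restricted to $\nbar\times\mn$ realizes the bilinear form required in condition \eqref{neccond}. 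The second case is entirely analogous: one replaces $\dagger$ by the alternative star map $\dagger'$ of Proposition \ref{stardual}, under which duals of tensor modules are star representations, and the assumption $\Mp\supset\mathfrak{gl}(n)$ now forces the row index of each $E_{ij}\in\mn$ to be even, so that $+\mathrm{str}$ is the correct normalization.

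The main obstacle is the careful bookkeeping of signs coming from the super-structure. The key observation making the proof go through is that the hypothesis on $\Mp$ is precisely what forces a uniform parity on one of the two matrix indices occurring in $\mn$, and this uniformity is what allows the restriction of the invariant supertrace form to be rescaled so that the $\dagger$-dual of the chosen matrix-unit basis of $\mn$ becomes, up to a global sign, the matrix-unit basis of $\nbar$. Without such a parity constraint the sign $(-1)^{|j|}$ would vary across the basis and no global rescaling could produce a $\dagger$-orthonormal basis.
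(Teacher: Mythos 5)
Your argument is correct and follows essentially the same route as the paper's: both reduce to the natural module $\mC^{m|n}$ and its dual, take the matrix transpose (adjusted by signs for the dual via Proposition \ref{stardual}) as the star map, and observe that the hypothesis on $\Mp$ forces a uniform parity constraint under which the pairing in \eqref{neccond} coincides with the restriction of $\mp\mathrm{str}$ to $\nbar\times\mn$. Your derivation of the invariance condition from $\mg$-invariance of $\mathrm{str}$ together with the orthogonality of the parabolic decomposition is exactly the content of the paper's ``trivial calculation taking into account that $[j]=[k]=1$'' and of the dual-basis observation in Remark \ref{Killing}, just phrased slightly more structurally.
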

\begin{proof}
The theorem is proven if it holds for $\mV=\mC^{m|n}$ and $\mV=\left(\mC^{m|n}\right)^\ast$. First we consider $\mV=\mC^{m|n}$. The easiest realization of $\mC^{m|n}$ is as $m$ commuting variables and $n$ anti-commuting ones. These are denoted by $\{x_j|j=1,\cdots,m+n\}$ and satisfy commutation relations
\begin{eqnarray*}
x_ix_j=(-1)^{[i][j]}x_jx_i&\mbox{with}&[k]=\overline{0} \mbox{ (respectively $\overline{1}$) if $k\le m$ (respectively $k>m$)}.
\end{eqnarray*}
The corresponding partial derivatives are then defined by the Leibniz rule $\partial_{x_i}x_j=\delta_{ij}+(-1)^{[i][j]}x_j\partial_{x_i}$. The differential operators $\{x_i\partial_{x_j}|i,j=1,\cdots,m+n\}$ generate a Lie superalgebra isomorphic to $\mathfrak{gl}(m|n)$. The inner product on $\mC^{m|n}$ is then given by $\langle x_i,x_j\rangle=\delta_{ij}$, which leads to star map
\begin{eqnarray*}
\left(x_i\partial_{x_j}\right)^\dagger=x_j\partial_{x_i}.
\end{eqnarray*}
The Borel subalgebra corresponds to $\{x_i\partial_{x_j}|i\le j\}$. Saying that $\Mp$ contains $\mathfrak{gl}(m)$ implies that $\mn$ is contained in the subalgebra $N=\{x_i\partial_{x_j}|i< j,m<j\}\subset\mathfrak{gl}(m|n)$. We also introduce the notation $\overline{N}=\{x_i\partial_{x_j}|j<i,m<i\}$ for the subalgebra which contains $\overline{\mn}$. It remains to be checked that the bilinear form $\overline{N}\times N\to\mC$ defined by
\begin{eqnarray}
\label{defbilform}
\left(x_k\partial_l,x_i\partial_{x_j}\right)&=&\delta_{il}\delta_{jk}
\end{eqnarray}
satisfies the property
\begin{eqnarray*}
\left([x_k\partial_l,x_a\partial_{x_b}]_{\overline{\mn}},x_i\partial_{x_j}\right)&=&\left(x_k\partial_l,[x_a\partial_{x_b},x_i\partial_{x_j}]_{{\mn}}\right)
\end{eqnarray*}
for $x_i\partial_{x_j}\in\mn$ and $x_k\partial_l\in\overline{\mn}$ for each parabolic such that $\mn\subset N$. Since the bilinear form \eqref{defbilform} is zero except when the weights of left and right hand side add up to zero, this statement is proved by proving that
\begin{eqnarray*}
\left([x_k\partial_l,x_a\partial_{x_b}],x_i\partial_{x_j}\right)&=&\left(x_k\partial_l,[x_a\partial_{x_b},x_i\partial_{x_j}]\right)
\end{eqnarray*}
holds for $x_a\partial_{x_b}\in\mathfrak{gl}(m|n)$, $x_i\partial_{x_j}\in N$ and $x_k\partial_l\in\overline{N}$. This property then follows from a trivial calculation taking into account that $[j]=[k]=1$.

The reasoning for $V=\left(\mC^{m|n}\right)^\ast$ is very similar. This representation can be realized by the partial derivatives $\{\partial_{x_i}\}$ with $\mathfrak{gl}(m|n)$-action given by the super commutator. The inner product is defined by $\langle\partial_{x_i},\partial_{x_j}\rangle=\delta_{ij}$. The star map on $\mathfrak{gl}(m|n)$ is then given by $\left(x_i\partial_{x_j}\right)^\dagger=(-1)^{[i]+[j]}x_j\partial_{x_i}$.
\end{proof}
In particular if we take $\mV$ to be a tensor module and $\Mp$ the parabolic subalgebra with reductive subalgebra $\mg_{\overline{0}}=\mathfrak{gl}(m)\oplus\mathfrak{gl}(n)$, so $\mn\cong\mC^{0|mn}$, the necessary requirements are met. This corresponds to the case studied in \cite{Cheng}.

It is a general result that the dual of a star representation is also a star representation, but with different star map as is stated in the following proposition.
\begin{proposition}
\label{stardual}
If the irreducible $\mg$-module $\mV$ is a star representation for star map $\dagger$, then $\mV^\ast$ is also a star representation for star map
\begin{eqnarray*}
A&\to& (-1)^{|A|}A^\dagger\qquad\mbox{ for }A\in\mg.
\end{eqnarray*}
\end{proposition}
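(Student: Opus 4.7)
The plan is to verify the two separate assertions contained in the statement: first that $\ast:A\mapsto(-1)^{|A|}A^{\dagger}$ really defines a star structure on $\mg$, and second that the canonical dual inner product makes $\mV^{\ast}$ a star representation for this new structure.

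For the first step I would check the four defining axioms directly. Antilinearity and evenness are immediate since $(-1)^{|A|}$ is real and $|A^{\dagger}|=|A|$. For the involution, $(A^{\ast})^{\ast}=(-1)^{|A|}((-1)^{|A|}A^{\dagger})^{\dagger}=(-1)^{2|A|}(A^{\dagger})^{\dagger}=A$. For the anti-homomorphism property,
\[
[A,B]^{\ast}=(-1)^{|A|+|B|}[A,B]^{\dagger}=(-1)^{|A|+|B|}[B^{\dagger},A^{\dagger}]=[(-1)^{|B|}B^{\dagger},(-1)^{|A|}A^{\dagger}]=[B^{\ast},A^{\ast}],
\]
where the outer signs combine correctly because $|A^{\dagger}|=|A|$ and $|B^{\dagger}|=|B|$.

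For the second step I would introduce the parity-preserving antilinear isomorphism $\phi:\mV\to\mV^{\ast}$ given by $\phi(v)(u)=(u,v)_{\mV}$, and define the inner product on $\mV^{\ast}$ by $(\phi(v),\phi(w))_{\mV^{\ast}}=(w,v)_{\mV}$; equivalently, declare the dual basis of an orthonormal basis of $\mV$ to be orthonormal. Applying the $\mg$-action on $\mV^{\ast}$ together with the hypothesis $(Au,v)_{\mV}=(u,A^{\dagger}v)_{\mV}$, one computes
\[
(A\phi(v))(u)=-(-1)^{|A||v|}(Au,v)_{\mV}=-(-1)^{|A||v|}(u,A^{\dagger}v)_{\mV}=-(-1)^{|A||v|}\phi(A^{\dagger}v)(u),
\]
and analogously $A^{\dagger}\phi(w)=-(-1)^{|A||w|}\phi(Aw)$. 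Combining these with the definitions of the two inner products yields
\[
(A\phi(v),\phi(w))_{\mV^{\ast}}=-(-1)^{|A||v|}(w,A^{\dagger}v)_{\mV}
\]
and
\[
(\phi(v),A^{\ast}\phi(w))_{\mV^{\ast}}=(-1)^{|A|}\!\cdot\!\bigl(-(-1)^{|A||w|}\bigr)(Aw,v)_{\mV}=-(-1)^{|A|+|A||w|}(w,A^{\dagger}v)_{\mV},
\]
where the last equality uses the star property on $\mV$ once more.

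Finally I would close by matching the two signs. Both sides can only be non-zero when $|A|+|v|=|w|$ modulo two, which implies $|A|(|v|+|w|+1)\equiv |A|(2|A|+1)\equiv 0\pmod{2}$ and hence $(-1)^{|A||v|}=(-1)^{|A|+|A||w|}$ on the support of the form. This gives $(A\alpha,\beta)_{\mV^{\ast}}=(\alpha,A^{\ast}\beta)_{\mV^{\ast}}$ on all of $\mV^{\ast}\times\mV^{\ast}$. Positive definiteness and hermiticity of the dual inner product are transferred from $\mV$ through $\phi$ in a straightforward manner. The only genuinely delicate part is the parity bookkeeping in the last step; everything else is a direct translation through the antilinear isomorphism $\phi$.
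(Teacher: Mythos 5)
Your argument is essentially the paper's own: the paper defines the dual inner product via an orthonormal basis of $\mV$ (equivalently, your antilinear isomorphism $\phi$) and then asserts that both the star-map axioms and the adjointness property follow by ``direct calculation''; you simply carry that calculation out in detail, which is a useful supplement rather than a different route. One small arithmetic slip at the end: from $|A|+|v|\equiv|w|\pmod 2$ you get $|v|+|w|\equiv|A|$, so the chain should read $|A|(|v|+|w|+1)\equiv|A|(|A|+1)\equiv 0\pmod 2$, not $|A|(2|A|+1)$ --- note that $|A|(2|A|+1)\equiv|A|$ is \emph{not} always zero, whereas $|A|(|A|+1)$ is, so the conclusion stands once the intermediate expression is corrected.
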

\begin{proof}
The inner product on $\mV^\ast$ is defined as $\langle \alpha,\beta\rangle=\sum_i\alpha(v_i)\overline{\beta(v_i)}$ for $\{v_i\}$ an orthonormal basis of $\mV$. The fact that $\langle A\alpha,\beta\rangle=(-1)^{|A|}\langle \alpha,A^\dagger \beta\rangle$ follows from a direct calculation. Also the fact that the proposed mapping satisfies the right definitions of a star map (given in Section \ref{intro}) follows in a straightforward manner.
\end{proof}

This result allows to formulate conditions \eqref{neccond} directly in terms of $\mV^\ast=\mW$. The conditions on the triple $(\mg,\Mp,\mW)$ then become
\begin{equation}
\label{neccond2}
\begin{cases}\circ&\mW\mbox{ is a star representation of }\mg \mbox{ with star map } \dagger \\
\circ&\mbox{There is a weight basis }\{X_a\}\mbox{ of }\mn\mbox{ such that the bilinear form } (\cdot,\cdot): \overline{\mn}\times\mn\to\mC\mbox{ given by}\\
&(X_a^\dagger,X_b)=(-1)^{|X_a|}\delta_{ab}\mbox{ satisfies }\left([Y,A]_{\overline{\mn}},X\right)=\left(Y,[A,X]_{\mn}\right)\mbox{ for all }X\in\mn,\,Y\in\overline{\mn}\mbox{ and }A\in\mg.\end{cases}
\end{equation}

We end this section with a remark to explain why the result on the parabolic category $\cO$ are stronger for the cases that satisfy conditions \eqref{neccond}.
\begin{remark}
\label{Killing}
{\rm The normalized Killing form on $\mathfrak{gl}(m|n)$ is given by $(x_k\partial_{x_l},x_i\partial_{x_j})=(-1)^{[k]}\delta_{kj}\delta_{li}$ for arbitrary $x_k\partial_{x_l}$ and $x_i\partial_{x_j}$. For Lie algebras the star map of an irreducible representation will always satisfy the property that it maps a basis of $\mg$ to its dual with respect to this form. For Lie superalgebras this is never the case. For instance for $\mathfrak{gl}(m|n)$ this follows from the fact that the adjoint representation is $\mathfrak{gl}(m|n)=\mC^{m|n}\otimes\left(\mC^{m|n}\right)^\ast$, a mixture of a covariant and contravariant representation.  However if we only look at the vectors in $\overline{\mn}+\mn\subset\mg$ and restrict the Killing form, this dual basis property can still hold. This is exactly what is done in Theorem \ref{tensor}. By taking $\mg_0$ large enough one reobtains some results on Kostant cohomology and BGG resolutions more closely related to the classical case.}
\end{remark}

\section{Kostant cohomology}
\label{Kostant}

The type of (co)homology we need was originally studied by Kostant for Lie algebras in \cite{MR0142696}. For Lie superalgebras results have been obtained in e.g. \cite{MR1937204, MR2646304, Cheng, MR2037723, MR2036954}.

From now on we assume that we have a fixed triple $(\mg,\Mp,\mV)$, in the notation of the introduction, of a semisimple Lie superalgebra $\mg$ with parabolic subalgebra and finite dimensional irreducible representation $\mV$ satisfying the properties \eqref{neccond} although many results could also be formulated without these conditions. As in the introduction we use the notation $\mg=\overline{\mn}+\mg_0+\mn$. We also use the notation $|\cdot|\in\mZ_2$ which maps a homogeneous element of a super vector space to $\overline{0}$ or $\overline{1}$ depending on whether it is even or odd. The summation $\sum_a$ will always be used for a summation $\sum_{a=1}^{\dim\mn}$ related to the basis $\{X_a\}$ of $\mn$ corresponding to properties \eqref{neccond}.

Kostant (co)homology for the Lie superalgebra $\mg$ with parabolic subalgebra $\Mp$ is defined by introducing the space of $k$-chains $C^k(\overline{\mn},\mV)=\Lambda^k(\mn)\otimes\mV$. These spaces are naturally $\Mp$-modules. The inner product $\langle\cdot,\cdot\rangle$ on $\mn$ is defined by $\langle X_1,X_2\rangle=(X_1^\dagger,X_2)$ with $(\cdot,\cdot)$ the bilinear form in properties \eqref{neccond}. This inner product extends to the tensor powers $\otimes^k\mn$ and therefore also to the superantisymmetric powers $\Lambda^k(\mn)$. The inner product on $C^k(\overline{\mn},\mV)$ is then given by the relation 
\begin{eqnarray}
\label{prodinnerprod}
\langle \alpha\otimes v,\beta\otimes w\rangle&=&\langle \alpha,\beta\rangle\langle v,w\rangle
\end{eqnarray}
for $\alpha,\beta\in\Lambda^k(\mn)$ and $v,w\in\mV$. Here we have also used the notation $\langle\cdot,\cdot \rangle$ for the inner product on $\mV$ which is assumed to exist by properties \eqref{neccond}. The definition of the bilinear form $(\cdot,\cdot)$ in properties \eqref{neccond} implies that $\langle\cdot,\cdot\rangle$ is hermitian and positive definite. The notation of the wedge product $X^1\wedge X^2$ stands for $\frac{1}{2}(X^1\otimes X^2-(-1)^{|X^1||X^2|}X^2\otimes X^1)$ and likewise for higher orders.

For $A\in\mg$ and $f\in C^k(\overline{\mn},\mV)$, $A[f]$ is defined as
\begin{eqnarray}
\label{gactionchain}
A[X\wedge g]=[A,X]_{\mn}\wedge g+X\wedge A[g]&\mbox{and}&A[v]=A\cdot v\mbox{ for }v\in\mV.
\end{eqnarray}
Note that for $A\in\Mp$, $A[f]=A\cdot f$ holds. If we restrict $A$ to be an element of $\Mp^\ast$, this introduces a $\Mp^\ast$-representation structure on $C^k(\overline{\mn},\mV)$, which corresponds to the quotients $\left(\Lambda^k\mg\otimes\mV\right)/\left(\Lambda^k\Mp^\ast\otimes\mV\right)$ which are isomorphic as vector spaces to $\Lambda^k\mn\otimes\mV$.

\begin{lemma}
\label{conscond}
If conditions \eqref{neccond} are satisfied, the modules $C^k(\overline{\mn},\mV)$ are star representations for $\mg_0$ with inner product introduced above and star map on $\mg_0$ the one inherited from $\mg$. Furthermore they satisfy the property
\begin{eqnarray*}
\langle Z\cdot f,g\rangle&=&\langle f, Z^\dagger[g]\rangle\quad\mbox{ for all }Z\in\Mp\mbox{ and }f,g\in C^k(\overline{\mn},\mV).
\end{eqnarray*}
\end{lemma}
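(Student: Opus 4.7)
The plan is to reduce the desired identity to the level of single tensor factors and extend by the Leibniz rule of the actions. First I would observe that both $Z \cdot$ and $Z^\dagger[\cdot]$ are defined on $C^k(\overline{\mn},\mV) = \Lambda^k \mn \otimes \mV$ through \eqref{gactionchain} as derivations with respect to the wedge-and-tensor structure, and that the inner product factorises multiplicatively across this structure by \eqref{prodinnerprod}. Consequently, once the identity $\langle Z \cdot \xi, \eta\rangle = \langle \xi, Z^\dagger[\eta]\rangle$ is established for $\xi, \eta$ both in $\mn$ and for $\xi, \eta$ both in $\mV$, the general case on $C^k$ follows by expanding along the Leibniz rule and comparing the two sides term by term.

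For the $\mV$ piece the identity is built into the definition of a star representation as required by \eqref{neccond}. For the $\mn$ piece, I would take $Z \in \Mp = \mg_0 + \mn$ and $X, Y \in \mn$, and compute
\begin{equation*}
\langle [Z, X]_{\mn}, Y\rangle = ([Z, X]_{\mn}^\dagger, Y) = ([X^\dagger, Z^\dagger]_{\overline{\mn}}, Y),
\end{equation*}
where the first equality is the definition of $\langle\cdot,\cdot\rangle$ on $\mn$, and the second uses the star identity $[A,B]^\dagger = [B^\dagger, A^\dagger]$ together with the fact that $[Z, X]_{\mn}^\dagger$ lies in $\overline{\mn}$. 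Applying hypothesis \eqref{neccond} with the substitutions $Y \mapsto X^\dagger$, $A \mapsto Z^\dagger$, $X \mapsto Y$ gives
\begin{equation*}
([X^\dagger, Z^\dagger]_{\overline{\mn}}, Y) = (X^\dagger, [Z^\dagger, Y]_{\mn}) = \langle X, [Z^\dagger, Y]_{\mn}\rangle = \langle X, Z^\dagger[Y]\rangle.
\end{equation*}
This argument covers both $Z \in \mg_0$ (where $Z^\dagger \in \mg_0$ and the bracket projections to $\mn$ and $\overline{\mn}$ are automatic, since $\mg_0$ normalises both) and $Z \in \mn$ (where the $\mn$-projection on the right is genuinely needed because $[\overline{\mn}, \mn]$ can meet $\mg_0$ and $\overline{\mn}$).

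The star representation claim for $\mg_0$ is then the special case $Z \in \mg_0 \subset \Mp$ of the identity just proved, since $Z^\dagger \in \mg_0$ implies $Z^\dagger[g] = Z^\dagger \cdot g$. Hermiticity and positive-definiteness of $\langle \cdot, \cdot\rangle$ on $C^k(\overline{\mn}, \mV)$ follow from \eqref{prodinnerprod} together with the analogous properties on $\mV$ (by hypothesis) and on $\mn$ (since $(X_a^\dagger, X_b) = \delta_{ab}$ makes $\{X_a\}$ an orthonormal basis of $\mn$, which extends to an orthonormal basis of each $\Lambda^k \mn$). The main subtlety I expect is verifying that the Koszul signs in the two Leibniz expansions match; this is automatic because $\dagger$ is even and the inner product on $\Lambda^k \mn$ is built from signed permanents of the one-factor inner product, but the parity bookkeeping when the $X_{a_i}$ have mixed parities requires care.
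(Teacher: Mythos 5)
The paper states Lemma~\ref{conscond} without proof, treating it as a routine verification of the definitions, so there is no author proof to compare against. Your argument is correct and is the expected one: reduce to the two one-factor identities on $\mn$ and on $\mV$, where the $\mV$ case is the defining property of a star representation and the $\mn$ case follows by applying condition~\eqref{neccond} to $([Z,X]_{\mn})^\dagger = [X^\dagger,Z^\dagger]_{\overline{\mn}}$, and then lift to $\Lambda^k\mn\otimes\mV$ through the derivation structure of \eqref{gactionchain} and the factorization \eqref{prodinnerprod}. One point you pass over quickly: the step $([Z,X]_{\mn})^\dagger = ([X^\dagger,Z^\dagger])_{\overline{\mn}}$ requires that $\dagger$ respects the triangular decomposition $\mg=\overline{\mn}+\mg_0+\mn$, i.e.\ that $\dagger$ carries $\mg_0$ into $\mg_0$ and not merely $\mn$ into $\overline{\mn}$; this is implicit in the lemma's phrase ``star map on $\mg_0$ the one inherited from $\mg$'' and follows because $\dagger$ sends a weight vector of weight $\mu$ to one of weight $-\mu$, and the root set of $\mg_0$ is closed under negation, but it deserves a sentence. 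Your closing observation about Koszul signs is also right and resolves as you say: once the one-factor identities hold, the two Leibniz expansions of $\langle Z\cdot(\alpha\otimes v),\beta\otimes w\rangle$ and $\langle\alpha\otimes v,Z^\dagger[\beta\otimes w]\rangle$ differ only by the replacement of $(-1)^{|Z||\alpha|}$ by $(-1)^{|Z||\beta|}$ in the cross term, which is harmless since the even hermitian form forces $|\alpha|=|\beta|$ whenever $\langle\alpha,\beta\rangle\neq 0$.
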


In our approach it is most natural to start from the codifferential 
\begin{equation}
\label{defcodiff}
\partial^\ast_k:C^k(\overline{\mn},\mV)\to C^{k-1}(\overline{\mn},\mV)\qquad \partial^\ast_k(X\wedge f)=-X\cdot f- X\wedge \partial_{k-1}^\ast (f),
\end{equation}
where we set $\partial_0^\ast=0$. It can be checked by straightforward calculations and induction that $\partial^\ast_k\circ\partial^\ast_{k+1}=0$ and $\partial^\ast_k$ is a $\Mp$-module morphism.

The cohomology groups are the $\Mp$-modules defined as
\begin{eqnarray}
H^k(\overline{\mn},\mV)&=& \ker(\partial_k^\ast)/\mbox{im}(\partial_{k+1}^\ast). 
\end{eqnarray}

To develop a Hodge theory we introduce the adjoint of $\partial^\ast$, which corresponds to the standard differential, $\langle \partial f,g\rangle=-\langle f,\partial^\ast g\rangle$, for $\langle\cdot,\cdot\rangle$ the inner product on $C^k(\overline{\mn},\mV)$ defined above.

By its definition and Lemma \ref{conscond}, $\partial_k:C^k(\overline{\mn},\mV)\to C^{k+1}(\overline{\mn},\mV)$ is a $\mg_0$-module morphism and satisfies $\partial_k\circ\partial_{k-1}=0$. It also follows straight away that $\partial$ and $\partial^\ast$ are disjoint operators, i.e. $\partial(\partial^\ast f)=0$ implies $\partial^\ast f=0$ and the same for $\partial$ and $\partial^\ast$ reversed. The Kostant quabla operator is then defined as $\Box=\partial\circ\partial^\ast+\partial^\ast\circ\partial$.

The following lemma follows immediately from the general theory in Proposition 2.1 in \cite{MR0142696}.
\begin{lemma}
\label{lemmaKostant}
The following decomposition of $\mg_0$-modules holds: {\rm
\begin{eqnarray}
C^k(\overline{\mn},\mV)&=&\mbox{im}\partial\,\oplus\, \ker \Box\,\oplus\,\mbox{im}\partial^\ast.
\end{eqnarray}
}Moreover, {\rm$\ker \partial=\mbox{im}\partial\,\oplus\, \ker \Box$, $\,\ker\partial^\ast=\ker \Box\,\oplus\,\mbox{im}\partial^\ast$} and {\rm $\,\mbox{im}\Box=\mbox{im}\partial\oplus\mbox{im}\partial^\ast$}.
\end{lemma}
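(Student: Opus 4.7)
The plan is to carry out the standard finite-dimensional Hodge-theoretic argument, for which all ingredients are now in place. By Lemma \ref{conscond}, $C^k(\overline{\mn},\mV)$ carries a positive definite Hermitian inner product, and by construction $\partial^2 = 0 = (\partial^\ast)^2$, while the defining relation $\langle \partial f, g\rangle = -\langle f, \partial^\ast g\rangle$ makes $-\partial^\ast$ the Hilbert-space adjoint of $\partial$; by Hermitian conjugation one also has $\langle \partial^\ast w, h\rangle = -\langle w, \partial h\rangle$. The crucial first step is to show $\ker\Box \subseteq \ker\partial\cap\ker\partial^\ast$: pairing $\Box h$ with $h$ and applying the adjoint relation twice yields $\langle \Box h, h\rangle = -\|\partial h\|^2 - \|\partial^\ast h\|^2$, and positive definiteness forces $\partial h = 0 = \partial^\ast h$ whenever $\Box h = 0$. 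This step is the one where the hypothesis \eqref{neccond} enters essentially, since positive definiteness of the inner product on $C^k(\overline{\mn},\mV)$ ultimately came from it.

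Pairwise orthogonality of $\mbox{im}\,\partial$, $\ker\Box$ and $\mbox{im}\,\partial^\ast$ is then immediate: the identity $\langle \partial u, \partial^\ast v\rangle = -\langle u, (\partial^\ast)^2 v\rangle = 0$ gives $\mbox{im}\,\partial \perp \mbox{im}\,\partial^\ast$, and the orthogonality of $\ker\Box$ with each image follows from the first step via the adjoint relation. (The disjointness of $\partial$ and $\partial^\ast$ recorded just above the lemma is a special case of the same computation, since $\partial(\partial^\ast f)=0$ gives $\|\partial^\ast f\|^2 = -\langle f,\partial(\partial^\ast f)\rangle = 0$.) To see that the three subspaces span the whole space I would compute the orthogonal complement of $\mbox{im}\,\partial + \mbox{im}\,\partial^\ast$: a vector $f$ lies in this complement iff $\langle f,\partial u\rangle = 0 = \langle f,\partial^\ast v\rangle$ for all $u,v$, and by adjointness this is equivalent to $\partial^\ast f = 0 = \partial f$, i.e.\ $f\in\ker\Box$. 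Finite-dimensionality then yields the decomposition $C^k(\overline{\mn},\mV) = \mbox{im}\,\partial \oplus \ker\Box \oplus \mbox{im}\,\partial^\ast$.

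The remaining equalities are formal consequences. The subspace $\ker\partial$ contains $\mbox{im}\,\partial$ (because $\partial^2 = 0$) and $\ker\Box$ (by the first step), and is orthogonal to $\mbox{im}\,\partial^\ast$ since $\partial f = 0$ gives $\langle f,\partial^\ast v\rangle = -\langle \partial f, v\rangle = 0$; combined with the three-term decomposition this forces $\ker\partial = \mbox{im}\,\partial\oplus\ker\Box$, and the description of $\ker\partial^\ast$ is symmetric. Finally, $\Box$ is easily checked to be self-adjoint by applying the adjoint relation twice in $\langle \Box f, g\rangle$, so $\mbox{im}\,\Box = (\ker\Box)^\perp = \mbox{im}\,\partial \oplus \mbox{im}\,\partial^\ast$. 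The $\mg_0$-equivariance of every summand is automatic because $\partial$, $\partial^\ast$ and $\Box$ are $\mg_0$-module morphisms. There is no serious obstacle here: the argument is textbook Hodge theory on a finite-dimensional inner product space, and the only thing to watch carefully is the sign $-1$ in the adjoint relation, which must be tracked consistently throughout.
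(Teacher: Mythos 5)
Your proof is correct. The paper itself gives no proof of this lemma: it simply appeals to Proposition 2.1 of Kostant (\cite{MR0142696}), having already noted that $\partial$ and $\partial^\ast$ are ``disjoint operators.'' Kostant's Proposition 2.1 is an abstract statement about a pair of disjoint differentials on a finite-dimensional space, and it derives the three-term decomposition from disjointness alone; the paper verifies disjointness via the inner product and then invokes that general result. You instead run the standard finite-dimensional Hodge-theoretic argument directly from the positive definite inner product, never passing through the abstract notion of disjointness. Both routes are valid here and are of roughly equal length; yours has the advantage of being self-contained, while the paper's has the advantage of explaining where the argument would still work even without an inner product (one only needs disjointness). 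One small point worth flagging so a reader does not stumble: with the sign convention $\langle\partial f,g\rangle=-\langle f,\partial^\ast g\rangle$ your identity $\langle\Box h,h\rangle=-\|\partial h\|^2-\|\partial^\ast h\|^2$ makes $\Box$ negative semidefinite rather than nonnegative; this is harmless, since all you need is that $\Box h=0$ forces $\partial h=\partial^\ast h=0$, but it is a sign one could easily get backwards.
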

This implies that the following $\mg_0$-module isomorphisms exist:
\begin{eqnarray*}
H^k(\overline{\mn},\mV)\,\cong\, \ker\Box\,\cong\, \ker( \partial_k)/\mbox{im}(\partial_{k-1}).
\end{eqnarray*}
In particular it follows that as a $\mg_0$-module, $H^k(\overline{\mn},\mV)$ is embedded in the $\mg_0$-star representation $C^k(\overline{\mn},\mV)$ and therefore $H^k(\overline{\mn},\mV)$ is completely reducible as a $\mg_0$-module.
\begin{corollary}
\label{Hcompred}
The space $H^k(\overline{\mn},\mV)$ is completely reducible as a $\Mp$-module.
\end{corollary}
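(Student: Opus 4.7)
The plan is to split the induced $\Mp$-action on $H^k(\overline{\mn},\mV)$ into its $\mg_0$-part and its $\mn$-part, verify that the $\mn$-part vanishes identically, and then import $\mg_0$-complete reducibility from the star structure on $C^k(\overline{\mn},\mV)$ supplied by Lemma \ref{conscond}.

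The key step will be to check that $\mn$ acts trivially on $H^k(\overline{\mn},\mV)$. Since $\partial^\ast$ is a $\Mp$-module morphism, cohomology inherits a $\Mp$-action. Given $X\in\mn$ and a representative $f\in\ker\partial^\ast_k$ of a cohomology class, the defining formula \eqref{defcodiff} yields
\begin{eqnarray*}
\partial^\ast_{k+1}(X\wedge f) &=& -X\cdot f - X\wedge \partial^\ast_k(f) \;=\; -X\cdot f ,
\end{eqnarray*}
so $X\cdot f$ lies in $\mbox{im}\,\partial^\ast_{k+1}$ and represents the zero class. Consequently the $\Mp$-action on $H^k(\overline{\mn},\mV)$ factors through the projection $\Mp\to\Mp/\mn\cong\mg_0$, and every $\mg_0$-invariant subspace of $H^k(\overline{\mn},\mV)$ is automatically $\Mp$-invariant.

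To finish, I would identify $H^k(\overline{\mn},\mV)$ with $\ker\Box$ as $\mg_0$-modules using Lemma \ref{lemmaKostant}. By Lemma \ref{conscond}, $C^k(\overline{\mn},\mV)$ is a $\mg_0$-star representation, hence the orthogonal complement of any $\mg_0$-invariant subspace with respect to its positive definite hermitian form is again invariant, so the finite dimensional module $C^k(\overline{\mn},\mV)$ is completely reducible as a $\mg_0$-module; in particular so is its submodule $\ker\Box$. Combined with the previous step this delivers the corollary. I anticipate no serious obstacle: the only point requiring a little care is the parity behaviour of \eqref{defcodiff} when $X$ is odd, but the codifferential is written in a parity-uniform way, so the one-line computation above is valid regardless of the parity of $X$.
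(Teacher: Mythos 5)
Your argument is correct and follows essentially the same route as the paper: the trivial $\mn$-action comes from the identity $\partial^\ast(X\wedge f)=-X\cdot f$ for $f\in\ker\partial^\ast$ (equation \eqref{defcodiff}), and $\mg_0$-complete reducibility comes from identifying $H^k(\overline{\mn},\mV)$ with $\ker\Box\subset C^k(\overline{\mn},\mV)$ via Lemma \ref{lemmaKostant} and invoking the $\mg_0$-star structure from Lemma \ref{conscond}. The only difference is cosmetic ordering of the two steps.
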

\begin{proof}
As argued above, $H^k(\overline{\mn},\mV)$ is completely reducible as a $\mg_0$-module. Equation \eqref{defcodiff} implies that if $f\in\ker\partial^\ast$, then $X\cdot f\in \mbox{im}\partial^\ast$ for $X\in \mn$ and therefore the $\mn$-action is trivial on $H^k(\overline{\mn},\mV)$.
\end{proof}

From condition \eqref{neccond} it follows immediately that for any $A\in\mg$,
\begin{eqnarray*}
[X_a^\dagger,A]_{\overline{\mn}}=\sum_{a}C_{ab}X_b^\dagger\mbox{ for constants }C_{ab}&\mbox{implies}&[A,X_c]_{\mn}=\sum_aC_{ac}X_a.
\end{eqnarray*}
This leads to the useful identity
\begin{eqnarray}
\label{propdualbasis}
\sum_af(|X_a|)\,\, X_a\otimes [X^\dagger_a,A]_{\overline{\mn}}&=&\sum_a f(|X_a|+|A|)[A,X_a]_{\mn}\otimes X_a^\dagger
\end{eqnarray}
for $A\in\mg$ and any function $f:\mZ_2\to\mC$.

The following technical facts about the codifferential will be essential for the sequel. 

\begin{lemma}
\label{propcodiff}
The explicit identity of the codifferential on $C^k(\overline{\mn},\mV)$ is given by
\begin{eqnarray*}
\partial^\ast(X^1\wedge\cdots\wedge X^k\otimes v)=\sum_{i=1}^k(-1)^{i+|X^i|(|X^{i+1}|+\cdots+|X^k|)}X^1\wedge\cdots^{\hat{i}}\cdot\wedge X^k\otimes X^i\cdot v\\
+\sum_{i<j}^k(-1)^{i+j+(|X^i|+|X^j|)(|X^{1}|+\cdots+|X^{i-1}|)+|X^j|(|X^{i+1}|+\cdots+|X^{j-1}|)}[X^i,X^j]\wedge X^1\wedge\cdots^{\hat{i}}\cdots^{\hat{j}}\cdot\wedge X^k\otimes v.
\end{eqnarray*}
For $A\in\mg$ and $f=X^1\wedge\cdots\wedge X^k\otimes v\in C^k(\overline{\mn},\mV)$, the following relation holds:
\begin{eqnarray*}
\partial^\ast(A[f])-A[\partial^\ast f]&=&\sum_{i=1}^k(-1)^{i-1+|X^i|(|X^1|+\cdots+|X^{i-1}|)}\left([A,X^i]_{\Mp^\ast}\right)\left[X^1\wedge\cdots^{\hat{i}}\cdot\wedge X^k\otimes v\right].
\end{eqnarray*}
\end{lemma}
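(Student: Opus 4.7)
The plan is to prove both formulas by induction on $k$, working directly from the recursive definition~\eqref{defcodiff}.

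For the first (explicit) formula, the base case $k=1$ is immediate from $\partial^\ast(X^1\otimes v)=-X^1\cdot v$. For the inductive step I would write $f=X^1\wedge g$ with $g=X^2\wedge\cdots\wedge X^k\otimes v$ and apply $\partial^\ast(X^1\wedge g)=-X^1\cdot g-X^1\wedge\partial^\ast g$. Expanding $X^1\cdot g$ by the derivation rule~\eqref{gactionchain} iteratively produces, for each position $j\in\{2,\ldots,k\}$, a term having $[X^1,X^j]$ at the head of the wedge (with $X^j$ removed), plus one "final" term of the form $X^2\wedge\cdots\wedge X^k\otimes X^1\cdot v$. These produce, respectively, the $(i=1,j)$-bracket contributions and the $i=1$ single-term contribution of the claimed sum, once each bracketed term is commuted back to the head of the wedge to collect the Koszul signs. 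The second piece $X^1\wedge\partial^\ast g$, by the inductive hypothesis applied to the $(k-1)$-chain $g$, produces exactly the remaining indices $i\ge 2$ and $2\le i<j$ of both sums; wedge-prefixing by $X^1$ shifts each Koszul exponent by the expected $|X^i|(|X^1|+\cdots)$ or $(|X^i|+|X^j|)|X^1|$ factor, recovering the signs in the stated formula.

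For the second (commutation) formula I would again use induction on $k$. The base case $k=1$ is a direct calculation: expanding $A[X^1\otimes v]=[A,X^1]_\mn\otimes v+(-1)^{|A||X^1|}X^1\otimes A\cdot v$ and using $A\cdot X^1-(-1)^{|A||X^1|}X^1\cdot A=[A,X^1]$ in $\mg$, together with the decomposition $[A,X^1]=[A,X^1]_\mn+[A,X^1]_{\Mp^\ast}$, leaves exactly $[A,X^1]_{\Mp^\ast}\cdot v$, which matches the single term on the right. For the inductive step with $f=X^1\wedge g$, I would compute $\partial^\ast(A[f])$ and $A[\partial^\ast f]$ separately via the recursive definition, rewrite $A[X^1\cdot g]$ as $[A,X^1][g]+(-1)^{|A||X^1|}X^1\cdot A[g]$ (using that $A[\cdot]$ respects the super-bracket on $\mg$), split $[A,X^1]$ into its $\mn$ and $\Mp^\ast$ parts, and finally apply the induction hypothesis to $g$. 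The $i=1$ term of the claimed sum emerges precisely from the $\Mp^\ast$-piece of $[A,X^1]$, while the $i\ge 2$ terms come from the inductive hypothesis after wedge-prefixing by $X^1$, with all parities in the exponents shifting exactly as required.

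The main obstacle will be the bookkeeping of Koszul signs: the statements are algebraic identities in which every sign in the final formula must emerge from super-commutations of odd elements in the wedge, super-Leibniz derivations, and the super-bracket convention $AB-(-1)^{|A||B|}BA=[A,B]$, and a careful induction must carry the total parity at each step. A subsidiary but essential check is that~\eqref{gactionchain} really does make $C^k(\overline{\mn},\mV)$ into a $\mg$-module, i.e.\ $A[B[f]]-(-1)^{|A||B|}B[A[f]]=[A,B][f]$; this is used in rewriting $A[X^1\cdot g]$ in the inductive argument, and should be verified separately, either by induction on $k$ from~\eqref{gactionchain} or by identifying $C^k(\overline{\mn},\mV)$ with the quotient $(\Lambda^k\mg\otimes\mV)/(\Lambda^k\Mp^\ast\otimes\mV)$ already mentioned in the text, on which $\mg$ acts in the obvious way.
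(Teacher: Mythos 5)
Your proof of the first (explicit) formula is sound and amounts to the same induction from \eqref{defcodiff} that the paper gestures at with ``follows quickly from the definition.''

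Your proof of the second formula has a genuine gap. You rewrite $A[X^1\cdot g]$ as $[A,X^1][g]+(-1)^{|A||X^1|}X^1\cdot A[g]$ on the grounds that ``$A[\cdot]$ respects the super-bracket on $\mg$,'' and flag the verification of $A[B[f]]-(-1)^{|A||B|}B[A[f]]=[A,B][f]$ as a subsidiary check. That check fails: for general $A,B\in\mg$ this identity is false, because \eqref{gactionchain} is a representation only when restricted to $\Mp$ or $\Mp^\ast$ (the paper says exactly this), and the candidate quotient realization you propose, $(\Lambda^k\mg\otimes\mV)/(\Lambda^k\Mp^\ast\otimes\mV)$, is not a $\mg$-quotient because $\Lambda^k\Mp^\ast\otimes\mV$ is not $\mg$-stable. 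Concretely, already on $g=X\otimes v$ with $X\in\mn$, one finds
\[
A[X^1[g]]-(-1)^{|A||X^1|}X^1[A[g]]-[A,X^1][g]=(-1)^{|A||X^1|}[X^1,[A,X]_{\Mp^\ast}]_\mn\otimes v,
\]
which is generically nonzero since $[\mn,\Mp^\ast]$ need not land in $\Mp^\ast$. The inductive step as you describe it therefore produces spurious terms. These do in fact cancel against terms you have also overlooked: in your ``wedge-prefixing by $X^1$'' step, the right-hand side of the claimed identity expanded via \eqref{gactionchain} produces not only $X^1\wedge\bigl([A,X^i]_{\Mp^\ast}\bigr)[\cdots]$ but also the derivation terms $\bigl[[A,X^i]_{\Mp^\ast},X^1\bigr]_\mn\wedge\cdots$, and it is precisely these that absorb the failure of the supercommutator identity. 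So the induction can be rescued, but only by establishing the stronger identity
\[
A[X^1\cdot g]-(-1)^{|A||X^1|}X^1\cdot A[g]-[A,X^1]_\mn\cdot g=\bigl([A,X^1]_{\Mp^\ast}\bigr)[g]+\sum_{i\geq 2}(\pm)\bigl[[A,X^i]_{\Mp^\ast},X^1\bigr]_\mn\wedge\cdots,
\]
which is no simpler than the target. The paper's route is cleaner: once the explicit formula is in hand, one computes $\partial^\ast(A[f])$ and $A[\partial^\ast f]$ term by term from it and uses the Jacobi identity (as in the $k=2$ check), never invoking any $\mg$-module structure on $C^k(\overline{\mn},\mV)$ that does not exist.
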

\begin{proof}
The first property follows quickly from the definition of $\partial^\ast$. The second property follows from a straightforward computation using the first property.
\end{proof}
The second statement in this lemma also shows again that $\partial^\ast$ is a $\Mp$-module morphism since $[\Mp,\mn]_{\Mp^\ast}=0$. In order to create BGG sequences we need the following proposition, which describes how far $\partial$ is from being a $\Mp$-module morphism.
\begin{corollary}
\label{pactionder}
For $Z\in\Mp$ and $f\in C^k(\overline{\mn},\mV)$ the following relation holds:
\begin{eqnarray*}
\partial (Z \cdot f)&=&Z\cdot \partial (f)+(k+1)\sum_a X_a\wedge[X_a^\dagger,Z]_{\Mp}\cdot f.
\end{eqnarray*}
\end{corollary}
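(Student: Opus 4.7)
The plan is to establish the identity by the adjoint method, pairing against an arbitrary test vector $g\in C^{k+1}(\overline{\mn},\mV)$ and then invoking Lemma~\ref{propcodiff}, which already controls the obstruction to $\partial^\ast$ being a $\mg$-morphism. Combining $\langle\partial h,g\rangle=-\langle h,\partial^\ast g\rangle$ with the star property $\langle Z\cdot h,g\rangle=\langle h,Z^\dagger[g]\rangle$ of Lemma~\ref{conscond}, I would first derive
\begin{equation*}
\langle\partial(Z\cdot f)-Z\cdot\partial f,\,g\rangle=\bigl\langle f,\,\partial^\ast(Z^\dagger[g])-Z^\dagger[\partial^\ast g]\bigr\rangle.
\end{equation*}
Here I use that the star map carries $\Mp=\mg_0+\mn$ into $\Mp^\ast=\mg_0+\overline{\mn}$, so that $Z^\dagger[\cdot]$ is the $\Mp^\ast$-action defined in \eqref{gactionchain}.

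Next I would apply Lemma~\ref{propcodiff} with $A=Z^\dagger$ to the inner expression. For $g=Y^1\wedge\cdots\wedge Y^{k+1}\otimes w$ this rewrites the right hand side as a signed sum over positions $i$ of terms $[Z^\dagger,Y^i]_{\Mp^\ast}[\hat g_i]$, where $\hat g_i$ omits the $i$-th wedge factor. Each such operator $[Z^\dagger,Y^i]_{\Mp^\ast}\in\Mp^\ast$ is then moved back onto $f$ by a second use of Lemma~\ref{conscond}; the $\dagger$-compatibility of the projections $((\cdot)_{\Mp^\ast})^\dagger=((\cdot)^\dagger)_{\Mp}$, together with $[A,B]^\dagger=[B^\dagger,A^\dagger]$, converts it into $[(Y^i)^\dagger,Z]_{\Mp}$ acting on $f$. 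The resulting terms already have the correct shape to be compared with the claimed right-hand side of the corollary.

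The final step is combinatorial: identifying the resulting expression as $\langle(k+1)\sum_a X_a\wedge[X_a^\dagger,Z]_{\Mp}\cdot f,\,g\rangle$. Writing $g$ in the basis $\{X_a\}$ and unfolding the super-wedge inner product, the pairing $\langle X_a\wedge h,g\rangle$ contracts $X_a$ against each of the $k+1$ slots of $g$ with signs matching the Koszul signs $s_i$ in Lemma~\ref{propcodiff}, and identity \eqref{propdualbasis} is precisely what turns the basis-free sum over slots into the basis-dependent sum over $a$. The factor $k+1$ reflects the $(k+1)$ possible insertion positions, equivalently the factorial normalisation built into $X^1\wedge X^2=\tfrac12(X^1\otimes X^2-(-1)^{|X^1||X^2|}X^2\otimes X^1)$. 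Non-degeneracy of $\langle\cdot,\cdot\rangle$ then closes the argument.

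The main obstacle is the sign bookkeeping in this final step: the Koszul signs of Lemma~\ref{propcodiff}, the parities of $Z$ and of the basis elements, and the factorial conventions in the super-wedge inner product all have to conspire to produce exactly the prefactor $k+1$ with no residual signs. A sanity check I would do first is the base case $k=0$: using the explicit formula $\partial v=\sum_a X_a\otimes X_a^\dagger\cdot v$ obtained by direct adjointness, a short computation applying \eqref{propdualbasis} with $A=Z$ yields $\partial(Z\cdot v)-Z\cdot\partial v=\sum_a X_a\otimes[X_a^\dagger,Z]_{\Mp}\cdot v$. This both confirms the coefficient and the signs, and suggests an alternative route of deriving a closed-form expression for $\partial$ on $C^k$ in terms of wedging with basis elements and then commuting $Z$ past it as a derivation, which reshuffles the same information but displays the derivation structure more transparently.
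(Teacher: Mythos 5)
Your proposal is correct and is essentially the paper's own proof: pair against a test vector, use adjointness $\langle\partial h,g\rangle=-\langle h,\partial^\ast g\rangle$ together with Lemma~\ref{conscond} to reduce to $\langle f,\partial^\ast(Z^\dagger[g])-Z^\dagger[\partial^\ast g]\rangle$, apply the second identity of Lemma~\ref{propcodiff} with $A=Z^\dagger$, transfer each $[Z^\dagger,X^i]_{\Mp^\ast}$ back onto $f$ by a second application of Lemma~\ref{conscond}, and reassemble the slot-sum as a wedge. The only cosmetic difference is that the paper carries out the final combinatorial step using the reproducing property $X=\sum_a\langle X_a,X\rangle X_a$ directly rather than invoking \eqref{propdualbasis}.
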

\begin{proof}
The proof is based on the fact that $\partial$ and $\partial^\ast$ are adjoint operators, Lemma \ref{conscond} and Lemma \ref{propcodiff} and the property $X=\sum_a\langle X_a,X\rangle X_a$ or $X^\dagger=\sum_a\langle X,X_a\rangle X_a^\dagger$ for any $X\in\mn$. For $f\in C^{k-1}(\overline{\mn},\mV)$ and  $g=X^1\wedge\cdots\wedge X^k\otimes v\in C^k(\overline{\mn},\mV)$ we calculate
\begin{eqnarray*}
\langle \partial Z\cdot f,g\rangle -\langle Z\cdot \partial f,g\rangle&=&\sum_{i=1}^k\langle f,\left([Z^\dagger,X^i]_{\Mp^\ast}\right)\left[X^1\wedge\cdots^{\hat{i}}\cdot\wedge X^k\otimes v\right]\rangle(-1)^{i-1+|X^i|(|X^1|+\cdots+|X^{i-1}|)}\\
&=&\sum_a\sum_{i=1}^k\langle X_a,X^i\rangle\langle [X_a^\dagger,Z]_{\Mp}f,X^1\wedge\cdots^{\hat{i}}\cdot\wedge X^k\otimes v\rangle(-1)^{i-1+|X^i|(|X^1|+\cdots+|X^{i-1}|)}\\
&=&k\sum_a\langle X_a\wedge [X_a^\dagger,Z]_{\Mp}\cdot f,X^1\wedge\cdots\wedge X^k\otimes v\rangle
\end{eqnarray*}
which proves the statement.
\end{proof}

\begin{lemma}
\label{propstandder}
The standard derivative satisfies
\begin{eqnarray*}
\partial(X\wedge f)&=&\frac{k+1}{2}\sum_aX_a\wedge [X_a^\dagger,X]_{\mn}\wedge f-\frac{k+1}{k}X\wedge\partial f
\end{eqnarray*}
for $X\in\mn$ and $f\in C^{k-1}(\overline{\mn},\mV)$ and $\partial v=\sum_a X_a\otimes X_a^\dagger\cdot v$ for $v\in\mV=C^0(\overline{\mn},\mV)$.
\end{lemma}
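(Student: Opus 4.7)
The second statement (the $k=0$ case) is immediate from the adjoint relation. For any $X_b \otimes w \in C^1(\overline{\mn},\mV)$, combining $\langle \partial v, X_b\otimes w\rangle = -\langle v, \partial^\ast(X_b\otimes w)\rangle$ with the defining formula $\partial^\ast(X_b\otimes w) = -X_b\cdot w$ from \eqref{defcodiff} gives $\langle \partial v, X_b\otimes w\rangle = \langle X_b^\dagger v, w\rangle$. On the other hand, the dual-basis property $\langle X_a, X_b\rangle = (X_a^\dagger, X_b) = \delta_{ab}$ from \eqref{neccond}, together with the product structure \eqref{prodinnerprod} of the inner product, yields $\langle \sum_a X_a \otimes X_a^\dagger v, X_b \otimes w\rangle = \langle X_b^\dagger v, w\rangle$. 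Non-degeneracy of the inner product then forces the formula $\partial v = \sum_a X_a\otimes X_a^\dagger v$.

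For the main formula, my plan is to verify it by pairing both sides with an arbitrary chain $g = Y^1\wedge\cdots\wedge Y^{k+1}\otimes w \in C^{k+1}(\overline{\mn},\mV)$. The adjoint relation reduces the left-hand side to $-\langle X\wedge f, \partial^\ast g\rangle$, and Lemma \ref{propcodiff} expands $\partial^\ast g$ into a sum of two kinds of terms: \emph{linear} terms involving $Y^i\cdot w$ on the $\mV$-factor, and \emph{bracket} terms involving $[Y^i, Y^j]$ in the $\mn$-factor. The strategy is to show that the linear terms reproduce $-\frac{k+1}{k}\langle X\wedge \partial f, g\rangle$, while the bracket terms reproduce $\frac{k+1}{2}\langle \sum_a X_a\wedge[X_a^\dagger, X]_{\mn}\wedge f, g\rangle$.

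For the linear contribution, I would contract the $\mn$-factor of $X\wedge f$ against the surviving wedge $Y^1\wedge\cdots\widehat{Y^{i}}\cdots\wedge Y^{k+1}$, using the dual-basis identity inherited from $\mn$; the residual $\mV$-pairing $\langle v, Y^i\cdot w\rangle = \langle (Y^i)^\dagger v, w\rangle$ is precisely what is produced inside $\partial f$ by the base-case formula, and the combinatorial factor $\frac{k+1}{k}$ arises from counting how an $X$ factor in $X\wedge f$ may coincide with the omitted slot $Y^i$. For the bracket contribution, the relevant pairing is $\langle X, [Y^i, Y^j]\rangle = ([Y^j, Y^i]_{\overline{\mn}}, X^\dagger)$ expanded in the dual basis, at which point the key identity \eqref{propdualbasis} applied to $A=X$ converts $\sum_a X_a\otimes [X_a^\dagger, X]_{\overline{\mn}}$ into $\sum_a [X, X_a]_{\mn}\otimes X_a^\dagger$, giving exactly the factor $\sum_a X_a\wedge[X_a^\dagger, X]_{\mn}$ expected on the right-hand side. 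The factor $\frac{k+1}{2}$ is then the product of $(k+1)$ (the number of positions where $X$ may occur in the wedge) and $\frac{1}{2}$ (from the unordered pairs $i<j$ in the bracket sum of Lemma \ref{propcodiff}).

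The principal obstacle will be bookkeeping: tracking the super-signs that appear in Lemma \ref{propcodiff}, ensuring that the antisymmetrization normalization $X^1\wedge X^2 = \tfrac{1}{2}(X^1\otimes X^2 - (-1)^{|X^1||X^2|} X^2\otimes X^1)$ produces the correct combinatorial factors, and showing that the mixed "cross-terms" from swapping $X$ into different positions of the wedge in $X\wedge f$ cancel against each other exactly as needed. Once these identifications are made, the two classes of terms coming from $\partial^\ast g$ match the two terms on the right-hand side of the claimed formula, and non-degeneracy of the inner product on $C^{k+1}(\overline{\mn},\mV)$ concludes the proof.
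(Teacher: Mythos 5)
Your overall strategy --- compute $\partial$ by adjunction, expanding $\partial^\ast g$ via Lemma \ref{propcodiff} --- is essentially the paper's; the paper packages this as first deriving the closed-form
\[
\partial(X^1\wedge\cdots\wedge X^k\otimes v) \;=\; \frac{k+1}{2}\sum_a X_a\wedge[X_a^\dagger, X^1\wedge\cdots\wedge X^k]_{\mn}\otimes v \;+\;(k+1)(-1)^k\sum_a X^1\wedge\cdots\wedge X^k\wedge X_a\otimes X_a^\dagger\cdot v
\]
and then reading off the recursion, whereas you propose to verify the recursion directly by pairing against a test chain. Your $k=0$ argument is fine.

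However, the term-matching you announce is wrong for $k\geq 2$. You claim the linear terms of $\partial^\ast g$ should by themselves reproduce $-\tfrac{k+1}{k}\langle X\wedge\partial f,g\rangle$ while the bracket terms reproduce $\tfrac{k+1}{2}\langle \sum_a X_a\wedge[X_a^\dagger,X]_{\mn}\wedge f, g\rangle$, and you appeal to ``the base-case formula'' $\partial v=\sum_a X_a\otimes X_a^\dagger\cdot v$ to account for what appears inside $\partial f$. But for $f\in C^{k-1}(\overline{\mn},\mV)$ with $k\geq 2$, the expression $\partial f$ has a bracket contribution of its own (the $\tfrac{k}{2}\sum_a X_a\wedge[X_a^\dagger,\cdot]_{\mn}$ piece in the closed-form above); that part of $-\tfrac{k+1}{k}X\wedge\partial f$ produces only $\langle v,w\rangle$-type $\mV$-pairings, never $\langle (Y^i)^\dagger v,w\rangle$. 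So the linear terms of $\partial^\ast g$ can recover only the \emph{linear} part of $-\tfrac{k+1}{k}X\wedge\partial f$, and the bracket terms of $\partial^\ast g$ must account simultaneously for $\tfrac{k+1}{2}\sum_a X_a\wedge[X_a^\dagger,X]_{\mn}\wedge f$ \emph{and} the bracket part of $-\tfrac{k+1}{k}X\wedge\partial f$. The missing ingredient is precisely the super Leibniz split $[X_a^\dagger, X\wedge f']=[X_a^\dagger,X]_{\mn}\wedge f'+(-1)^{|X_a||X|}X\wedge[X_a^\dagger,f']_{\mn}$, which divides the bracket contribution into those two pieces. As written your bookkeeping scheme will not close: you will find a mismatch among the $\langle v,w\rangle$-weighted terms. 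Once you insert the Leibniz split the combinatorial factors you identified ($k+1$ from the choice of omitted slot, $\tfrac12$ from ordered pairs) do come out right.
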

\begin{proof}
The proof of the formula for $C^0(\overline{\mn},\mV)$ follows immediately from the definition of $\partial$ as the adjoint operator of $\partial^\ast$. Starting from Lemma \ref{propcodiff} it can be derived from a tedious calculation that
\begin{eqnarray*}
\partial\left(X^1\wedge\cdots\wedge X^k\otimes v\right)&=&\frac{k+1}{2}\sum_aX_a\wedge [X_a^\dagger,X^1\wedge\cdots\wedge X^k]_{\mn}\otimes v\\
&&+(k+1)(-1)^k\sum_a X^1\wedge\cdots\wedge X^k \wedge X_a\otimes X_a^\dagger\cdot v
\end{eqnarray*}
holds. The result then follows easily.
\end{proof}

We show that the Kostant quabla operator $\Box$ can be expressed in terms of Casimir operators, as in Theorem 4.4 in \cite{MR0142696} for Lie algebras, or Lemma 4.4 in \cite{MR2036954} for $\mg=\mathfrak{gl}(m|n)$ and $\mathfrak{g}_0=\mg_{\overline{0}}=\mathfrak{gl}(m)\oplus\mathfrak{gl}(n)$. Therefore we need to assume that the basis $\{X_a\}$ of $\mn$ extends with a basis $\{T_\kappa\}$ of $\Mp^\ast$ to a basis of $\mg$ for which the dual basis with respect to the (normalized) Killing form is of the form $\{X_a^\dagger, T_\kappa^\ddagger\}$. Note that this is true for the cases considered in Theorem \ref{tensor} because of Remark \ref{Killing}. The necessity of introducing the notation $\ddagger$ comes from the fact that this mapping will not correspond to the star map $\dagger$ on $\mg$ corresponding to the representation $\mV$.

\begin{theorem}
\label{Thmquabla}
Consider a Lie superalgebra $\mg$, with parabolic subalgebra $\Mp$ and representation $\mV$ for which conditions \eqref{neccond} are satisfied. Assume there exists a basis $\{ X_a, T_\kappa\}$ of $\mg$ such that the dual basis $\{X^{\ddagger},T_\kappa^\ddagger\}$ with respect to the normalized Killing form $(\cdot,\cdot)_K$satisfies $X_a^\ddagger=X_a^\dagger$, then the Kostant quabla operator on $C^k(\overline{\mn},\mV)$ takes the form
\begin{eqnarray*}
\iota(\Box f)&=&-\frac{k+1}{2}\left(\cC_2(\mV)+\sum_a X_a X_a^\dagger-\sum_\kappa T_\kappa T_\kappa^\ddagger\right) \iota(f)\\
&=&-\frac{k+1}{2}\left(\cC_2(\mV)-\cC_2+2\sum_a X_a X_a^\dagger\right) \iota(f)
\end{eqnarray*}
where $\iota$ is the natural embedding $\iota:\Lambda^k\mn\otimes\mV\,\hookrightarrow \,\Lambda^k\mg\otimes\mV$ and $\cC_2(\mV)$ the value of the Casimir operator $\cC_2=\sum_aX_aX^\dagger_a +\sum_\kappa T_\kappa T^\ddagger_\kappa$ on $\mV$.
\end{theorem}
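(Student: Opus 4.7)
My plan is to prove the identity by direct computation of $\Box = \partial\partial^\ast + \partial^\ast\partial$ on a typical homogeneous element $\omega = X^1\wedge\cdots\wedge X^k\otimes v$, using the explicit formulas from Lemmas \ref{propcodiff} and \ref{propstandder}. First, note that the two displayed expressions are equivalent via the definition $\cC_2 = \sum_a X_aX_a^\dagger + \sum_\kappa T_\kappa T_\kappa^\ddagger$, so it suffices to prove the first form.

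As a warm-up and orientation, I would handle the $k=0$ case separately. There $\partial^\ast$ vanishes on $C^0(\overline{\mn},\mV)=\mV$, and Lemma \ref{propstandder} combined with Lemma \ref{propcodiff} yields
\[
\Box v \;=\; \partial^\ast\partial v \;=\; \partial^\ast\!\Bigl(\sum_a X_a\otimes X_a^\dagger v\Bigr) \;=\; -\sum_a X_a\cdot X_a^\dagger v .
\]
Since $\cC_2$ acts on the irrep $\mV$ by the scalar $\cC_2(\mV)=\sum_a X_aX_a^\dagger + \sum_\kappa T_\kappa T_\kappa^\ddagger$, the right-hand side of the claim for $k=0$ also evaluates to $-\sum_a X_aX_a^\dagger v$, confirming the base case and, crucially, the sign pattern.

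For general $k$ I would split $\partial = D_{\rm mod}+D_{\rm ad}$ along the two summands in Lemma \ref{propstandder} (the $X_a\otimes X_a^\dagger$-piece acting on $\mV$, and the $X_a\wedge[X_a^\dagger,\cdot]_\mn$-piece acting by adjoint) and similarly $\partial^\ast = P_{\rm mod}+P_{\rm br}$ from Lemma \ref{propcodiff}. Expanding $\Box$ gives four families of cross-terms. The $D_{\rm mod}P_{\rm mod}+P_{\rm mod}D_{\rm mod}$ piece, after supercommuting $X_a$ past $X^i$ in $U(\mg)$ and applying identity \eqref{propdualbasis}, reproduces $(k+1)\sum_a X_aX_a^\dagger$ acting on the $\mV$-factor together with commutator corrections $[X_a^\dagger,X^i]$. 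The mixed terms $D_{\rm mod}P_{\rm br}+P_{\rm br}D_{\rm mod}$ and $D_{\rm ad}P_{\rm mod}+P_{\rm mod}D_{\rm ad}$, together with the bracket-bracket term $D_{\rm ad}P_{\rm br}+P_{\rm br}D_{\rm ad}$, reassemble via the Jacobi identity and \eqref{propdualbasis} into the Casimir tensor $\sum_a X_a\otimes X_a^\dagger+\sum_\kappa T_\kappa\otimes T_\kappa^\ddagger$ acting through the natural $\mg$-module structure on $\Lambda^k\mg\otimes\mV$ and then pulled back through $\iota$. After collecting terms, the $\mV$-contributions reconstitute $\cC_2(\mV)$, the $\mn$-direction contributions leave the extra $+\sum_a X_aX_a^\dagger$, and the $\Mp^\ast$-direction contributions appear with the opposite sign $-\sum_\kappa T_\kappa T_\kappa^\ddagger$; this last sign reversal is precisely what the Killing-dual compatibility $X_a^\ddagger = X_a^\dagger$ is designed to deliver, since it ensures that brackets leaving $\mn$ are recognised as $T$-directions under the same pairing.

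The main obstacle is the super-sign bookkeeping across the four cross-term families, particularly tracking whether each bracket $[X_a^\dagger,X^i]$ lands inside $\mn$ or inside $\Mp^\ast$ and verifying that the two cases produce respectively $+\sum_a X_aX_a^\dagger$ and $-\sum_\kappa T_\kappa T_\kappa^\ddagger$ with the prescribed $\tfrac{k+1}{2}$-coefficient. A useful check during the calculation is that both sides of the claimed identity are manifestly $\mg_0$-equivariant, so any discrepancy is itself $\mg_0$-equivariant; together with the already-verified $k=0$ base case and the known eigenvalue normalisation this strongly constrains potential errors. If the direct expansion becomes unwieldy, an alternative route is to reduce via $\mg_0$-equivariance to the action on $\mg_0$-highest-weight vectors in $\Lambda^k\mn\otimes\mV$, on which the bracket terms simplify considerably.
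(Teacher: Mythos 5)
Your proposal is correct in spirit and relies on the same ingredients as the paper (Lemmas \ref{propcodiff} and \ref{propstandder}, identity \eqref{propdualbasis}, the Killing-form compatibility $X_a^\ddagger=X_a^\dagger$, and recognition of the Casimir structure after pushing through $\iota$), but the computational organization differs. The paper does not split $\partial$, $\partial^\ast$ into pieces and expand the resulting cross-terms; instead it first rewrites $\iota(\partial f)$ as
\[
\iota(\partial f)=\tfrac{k+1}{2}\sum_a X_a\wedge[X_a^\dagger,\,\cdot\,]-\tfrac{k+1}{2}\sum_\kappa T_\kappa\wedge[T_\kappa^\ddagger,\,\cdot\,]+(k+1)\sum_a X_a\wedge(\,\cdot\,)\otimes X_a^\dagger\cdot v,
\]
using the Killing-form identity $\sum_a X_a\wedge[X_a^\dagger,X]_{\Mp^\ast}=\sum_\kappa T_\kappa\wedge[T_\kappa^\ddagger,X]$ for $X\in\mn$, so that $\iota(\partial f)$ is manifestly expressed over a full dual pair of bases of $\mg$. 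Combined with the invariance of the Killing form (the second displayed identity in the paper's proof), the anticommutator with $\partial^\ast$ then reduces verbatim to Kostant's classical calculation, which the paper simply cites. Your scheme of expanding $\Box$ into four cross-term families and reassembling the Casimir tensor afterward buys you an explicit, self-contained computation without invoking Kostant, at the cost of considerably heavier super-sign bookkeeping and of needing to rediscover the Killing-form identity midway; the paper's rewriting front-loads that identity, which is why the argument closes so quickly there. If you pursue your route, the one thing to make explicit is that after supercommuting $X_a$ past $X^i$ you are really using the invariance of $(\cdot,\cdot)_K$ to convert the resulting brackets $[X_a^\dagger,X^i]$ that fall into $\Mp^\ast$ into $T_\kappa$-terms with the opposite sign --- that is exactly where the hypothesis $X_a^\ddagger=X_a^\dagger$ and the normalization of the Killing form enter, as you correctly anticipate.
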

Before we prove this relation, we note that even though the right hand side is defined as an element of $\Lambda^k\mg\otimes\mV$ it follows from the theorem that it is contained in $C^k(\overline{\mn},\mV)$.
\begin{proof}
We start by observing that the operator $\partial:C^k(\overline{\mn},\mV)\to C^{k+1}(\overline{\mn},\mV)$ can be rewritten as
\begin{eqnarray*}
\iota\left(\partial (X^1\wedge\cdots\wedge X^k\otimes v)\right)&=&\frac{k+1}{2}\sum_aX_a\wedge [X^\dagger_a,X^1\wedge\cdots\wedge X^k]\otimes v\\
&&-\frac{k+1}{2}\sum_\kappa T_\kappa\wedge [T^\ddagger_\kappa,X^1\wedge\cdots\wedge X^k]\otimes v\\
&&+(k+1)\sum_a(-1)^{|X_a|(|X^1|+\cdots+|X^k|)} X_a\wedge X^1\wedge\cdots\wedge X^k\otimes X^\dagger_a\cdot v
\end{eqnarray*}
which follows from the proof of Lemma \ref{propstandder} and the relation
\begin{eqnarray*}
\sum_a X_a\wedge [X_a^\dagger,X]_{\Mp^\ast}&=&\sum_a X_a\wedge \sum_\kappa \left(T_\kappa^{\ddagger},[X_a^\dagger,X]\right)T_\kappa\\
&=&-\sum_\kappa (-1)^{|X|(|X|+|T_\kappa|)}\sum_a X_a\wedge  \left([T_\kappa^{\ddagger},X],X_a^\dagger\right)T_\kappa\\
&=&-\sum_\kappa (-1)^{|T_\kappa|(|X|+|T_\kappa|)}\sum_a X_a\wedge  \left(X_a^\dagger,[T_\kappa^{\ddagger},X]\right)T_\kappa=T_\kappa\wedge [T_\kappa^{\ddagger},X]
\end{eqnarray*}
for $X\in\mn$. The property 
\begin{eqnarray*}
&&\sum_a\left( (-1)^{|X_a||X|}[X_a,X]\otimes X^\dagger_a\cdot v+ X_a\otimes [X_a^\dagger,X]\cdot v\right)\\
&=&-\sum_{\kappa}\left((-1)^{|T_\kappa||X|}[T_\kappa,X]\otimes T_\kappa^\ddagger\cdot v+ T_\kappa\otimes [T_\kappa^\ddagger,X]\cdot v\right)
\end{eqnarray*}
follows from the invariance of the Killing form. The proposed equality can then be calculated by using this identity, along the lines of \cite{MR0142696}.
\end{proof}

The space $C^k(\mn,\mV^\ast)=\Lambda^k\overline{\mn}\otimes \mV^\ast$ can be identified with the dual space of $C^k(\overline{\mn},\mV)=\Lambda^k\mn\otimes \mV$ where the pairing is induced from the bilinear form in equation \eqref{neccond} and the pairing between $\mV$ and $\mV^\ast$ is as described at the beginning of Section \ref{ExamGL}. 

\begin{definition}
The bilinear form 
\begin{eqnarray*}
(\cdot,\cdot):\left(\otimes^k\overline{\mn}\otimes\mV^\ast\right)\times\left(\otimes^k\mn\otimes\mV\right)\to\mC
\end{eqnarray*}
is defined inductively by 
\begin{eqnarray*}
(Y\otimes q,X\otimes p)&=&(-1)^{|q||X|}(Y,X)(q,p)
\end{eqnarray*}
for $Y\in\overline{\mn}$, $X\in\mn$, $q\in\otimes^{k-1}\overline{\mn}\otimes\mV^\ast$, $p\in\otimes^{k-1}{\mn}\otimes\mV$ and with the bilinear form $\overline{\mn}\times\mn\to\mC$ from properties \eqref{neccond} and the bilinear form $\mV^\ast\times\mV\to\mC$ corresponding to the evaluation of $\mV^\ast$ on $\mV$. The bilinear form $C^k(\mn,\mV^\ast)\times C^k(\overline{\mn},\mV)\to\mC$ is the restriction of the form above.
\end{definition}
It is important to pay attention to the powers of $-1$ which are included in this definition and which did not appear in products of inner products, e.g. in equation \eqref{prodinnerprod}. This bilinear form satisfies 
\[(A[q],p)=-(-1)^{|A||q|}(q,A[p])\]
with $A[p]$ defined as above Lemma \ref{conscond} for $A\in\mg$ and $A[q]$ analogously.

Then we can define the operators $\delta^\ast: C^k(\mn,\mV^\ast)\to C^{k-1}(\mn,\mV^\ast)$ and $\delta:C^k(\mn,\mV^\ast)\to C^{k+1}(\mn,\mV^\ast)$ as the adjoint operators with respect to $(\cdot,\cdot)$ of respectively $\partial$ and $\partial^\ast$. By the structure of the bilinear form they are $\mg_0$-module morphisms. Direct calculations lead to the result that $\delta^\ast$ acts as the codifferential. It is insightful to mention that the operator $\delta$ has properties as
\begin{eqnarray*}
\delta(w)&=&\sum_a(-1)^{|X_a|}X^\dagger_a\otimes X_a\cdot w
\end{eqnarray*}
for $w\in\mV^\ast$. This is logical in the sense of Proposition \ref{stardual} since the star map on $\mg$ corresponding to $\mV^\ast$ is exactly given by $A\to(-1)^{|A|}A^\dagger$. All these considerations lead to the property in the following lemma.
\begin{lemma}
\label{dualcohom}
The cohomology space $H^k(\mn,\mV^\ast)$ defined as the $\mg_0$-module {\rm $\ker \delta^\ast_k/\mbox{im}\delta^\ast_{k+1}$} satisfies
\begin{eqnarray*}
H^k(\mn,\mV^\ast)&=&\left(H^k(\overline{\mn},\mV)\right)^\ast
\end{eqnarray*}
with $\left(H^k(\overline{\mn},\mV)\right)^\ast$ the $\mg_0$-dual module of $H^k(\overline{\mn},\mV)$. By seeing $H^k(\mn,\mV^\ast)$ as a $\Mp$-module with trivial $\mn$-action this duality also holds as $\Mp$-modules.
\end{lemma}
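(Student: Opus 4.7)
The plan is to observe that, via the nondegenerate pairing $(\cdot,\cdot)$, the complex $(C^\bullet(\mn,\mV^\ast),\delta^\ast)$ is by construction the linear dual of the complex $(C^\bullet(\overline{\mn},\mV),\partial)$; the stated duality of cohomologies will then follow from standard linear algebra combined with the Hodge decomposition of Lemma \ref{lemmaKostant}.

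More concretely, $\delta^\ast_k$ is defined as the transpose of $\partial_{k-1}$ and $\delta^\ast_{k+1}$ as the transpose of $\partial_k$ under the pairing. Since $\partial\circ\partial=0$, the relation $\delta^\ast\circ\delta^\ast=0$ is automatic, so $H^k(\mn,\mV^\ast)$ is well defined. Elementary finite-dimensional duality then yields
\begin{eqnarray*}
\ker\delta^\ast_k=(\mathrm{im}\,\partial_{k-1})^0\qquad\mbox{and}\qquad \mathrm{im}\,\delta^\ast_{k+1}=(\ker\partial_k)^0,
\end{eqnarray*}
where $(\cdot)^0\subset C^k(\mn,\mV^\ast)$ denotes the annihilator under the pairing. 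Next I would apply Lemma \ref{lemmaKostant}, which gives $\ker\partial_k=\mathrm{im}\,\partial_{k-1}\oplus\ker\Box$ with $\ker\Box\cong H^k(\overline{\mn},\mV)$ as $\mg_0$-modules; passing to annihilators interchanges subspace and quotient, producing the canonical identification
\begin{eqnarray*}
H^k(\mn,\mV^\ast)=(\mathrm{im}\,\partial_{k-1})^0/(\ker\partial_k)^0\,\cong\,\left(\ker\partial_k/\mathrm{im}\,\partial_{k-1}\right)^\ast\,\cong\,\left(H^k(\overline{\mn},\mV)\right)^\ast.
\end{eqnarray*}
The $\mg_0$-equivariance of this isomorphism is inherited from the super-invariance $(A[q],p)=-(-1)^{|A||q|}(q,A[p])$ of the pairing together with the fact that $\partial$ is a $\mg_0$-morphism: both $\mathrm{im}\,\partial_{k-1}$ and $\ker\partial_k$ are $\mg_0$-submodules of $C^k(\overline{\mn},\mV)$, so their annihilators carry the dual $\mg_0$-structure.

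To upgrade this to an isomorphism of $\Mp$-modules (with trivial $\mn$-action on the left-hand side), I would invoke Corollary \ref{Hcompred}: since $\mn$ acts trivially on $H^k(\overline{\mn},\mV)$, its $\mg_0$-dual admits a unique extension to a $\Mp$-module with trivial $\mn$-action, which matches the declared $\Mp$-structure on $H^k(\mn,\mV^\ast)$. The only genuine obstacle I anticipate is the bookkeeping of the sign $(-1)^{|q||X|}$ in the definition of the pairing, which must be tracked carefully to confirm that the adjoint identifications really are $\mg_0$-equivariant in the super sense; once the sign conventions are aligned, no deeper computation is required.
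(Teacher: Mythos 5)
Your proposal is correct and follows essentially the same path the paper takes (the paper does not write the proof out, but the paragraph preceding the lemma — defining $\delta^\ast$ as the transpose of $\partial$ under the super-invariant pairing, noting that $\delta^\ast$ is the codifferential, and invoking the Hodge decomposition of Lemma \ref{lemmaKostant} — is exactly the argument you make explicit). The identifications $\ker\delta^\ast_k=(\mathrm{im}\,\partial_{k-1})^0$, $\mathrm{im}\,\delta^\ast_{k+1}=(\ker\partial_k)^0$, and $(\mathrm{im}\,\partial_{k-1})^0/(\ker\partial_k)^0\cong(\ker\partial_k/\mathrm{im}\,\partial_{k-1})^\ast$ are the right finite-dimensional linear algebra, the use of $\ker\partial_k/\mathrm{im}\,\partial_{k-1}\cong H^k(\overline{\mn},\mV)$ is exactly what Lemma \ref{lemmaKostant} supplies, and your handling of the $\Mp$-structure via Corollary \ref{Hcompred} (trivial $\mn$-action dualizes to trivial $\mn$-action) is the intended one. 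You correctly flag the sign $(-1)^{|q||X|}$ as the only delicate point; the paper has already built that sign into the pairing precisely so that $(A[q],p)=-(-1)^{|A||q|}(q,A[p])$ holds, which is what makes the annihilators $\mg_0$-submodules and the isomorphism $\mg_0$-equivariant.
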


Finally we prove how the cohomology groups can be easily characterized using the quadratic Casimir operator.
\begin{theorem}
\label{structure cohomgroups}
Consider $\mg=\mathfrak{gl}(m|n)$, a parabolic supalgebra $\Mp=\mg_0+\mn$ and a representation $\mW_\lambda$ such that $(\mg,\Mp,\mW_\lambda)$ satisfies conditions \eqref{neccond2}. As a $\mg_0$-module, the Kostant cohomology group $H^k(\mn,\mW_\lambda)$ is the direct sum of the $\mg_0$-submodules of $\Lambda^k\overline{\mn}\otimes \mW_\lambda$ which have a highest weight $\mu$ such that $\cC_2(\mW_\lambda)=\cC_2(\mW_\mu)$.
\end{theorem}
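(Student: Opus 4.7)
The plan is to combine the Hodge decomposition from Lemma \ref{lemmaKostant} with the Casimir formula of Theorem \ref{Thmquabla}. Set $\mV:=\mW_\lambda^\ast$; by Proposition \ref{stardual}, $\mV$ satisfies \eqref{neccond}, so the apparatus developed in Section \ref{Kostant} applies to $(\mg,\Mp,\mV)$. Lemma \ref{dualcohom} identifies the $\mg_0$-module $H^k(\mn,\mW_\lambda)$ with the $\mg_0$-dual of $H^k(\overline{\mn},\mV)$, and Lemma \ref{lemmaKostant} identifies the latter as a $\mg_0$-module with $\ker\Box$ inside $C^k(\overline{\mn},\mV)=\Lambda^k\mn\otimes\mV$.

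Next, since $\partial$ is a $\mg_0$-morphism (Lemma \ref{propstandder}) and $\partial^\ast$ is a $\Mp$-morphism (Lemma \ref{propcodiff}), $\Box$ is $\mg_0$-equivariant and hence acts as a scalar $c_\nu$ on each $\mg_0$-isotypic component of $\Lambda^k\mn\otimes\mV$ of highest weight $\nu$. To compute $c_\nu$ I would apply Theorem \ref{Thmquabla} to a $\mg_0$-highest weight vector $v$ of weight $\nu$; Remark \ref{Killing} provides the required basis with $X_a^\ddagger=X_a^\dagger$ for $\mathfrak{gl}(m|n)$. Splitting $\{T_\kappa\}=\{X_a^\dagger\}\cup\{\mg_0\text{-basis}\}$ yields the Casimir decomposition
\[\cC_2=\sum_a X_aX_a^\dagger+\sum_a X_a^\dagger X_a+\cC_2^{\mg_0},\]
so that the second form of Theorem \ref{Thmquabla} rewrites as an operator expression involving $\cC_2^{\mg_0}$ and $\sum_a(X_aX_a^\dagger-X_a^\dagger X_a)$. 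Evaluated on $v$, the sum of super commutators $\sum_a[X_a,X_a^\dagger]_{\mathrm{s}}\in\mg_0$ together with $\cC_2^{\mg_0}$ reassembles, along the lines of \cite{MR0142696} and its $\mathfrak{gl}(m|n)$-adaptation in \cite{MR2036954}, into the $\mg$-Casimir scalar $\cC_2(\mW_\nu)$, giving $c_\nu=\tfrac{k+1}{2}(\cC_2(\mW_\nu)-\cC_2(\mV))$. Hence $\ker\Box$ is the direct sum of $\mg_0$-components with $\cC_2(\mW_\nu)=\cC_2(\mW_\lambda)$, using $\cC_2(\mV)=\cC_2(\mW_\lambda)$.

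Finally, the $\mg_0$-duality of Lemma \ref{dualcohom} transports this to $H^k(\mn,\mW_\lambda)\subset\Lambda^k\overline{\mn}\otimes\mW_\lambda$: a $\mg_0$-irreducible component of highest weight $\nu$ on the primal side corresponds to one of dual highest weight $\mu$ on the dual side, and for $\mathfrak{gl}(m|n)$ the Casimir value is preserved under this duality, so $\cC_2(\mW_\nu)=\cC_2(\mW_\lambda)$ translates to $\cC_2(\mW_\mu)=\cC_2(\mW_\lambda)$, yielding the stated characterization. The principal technical obstacle is the Casimir reassembly in the middle paragraph: identifying $\sum_a[X_a,X_a^\dagger]_{\mathrm{s}}\in\mg_0$ with twice the super half-sum of roots of $\mn$, so that its action on $v$ combines with $\cC_2^{\mg_0}(\nu)$ to give $\cC_2(\mW_\nu)$, which requires careful tracking of super signs in the decomposition of $\mg$ through the parabolic.
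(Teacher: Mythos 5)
Your overall route — quabla is $\mg_0$-equivariant, hence scalar on each $\mg_0$-isotypic piece, the scalar is a Casimir difference, and the kernel is where the difference vanishes — is the paper's; the difference is that you pass to the primal side $\Lambda^k\mn\otimes\mV$ with $\mV=\mW_\lambda^\ast$, whereas the paper applies the dual form of Theorem \ref{Thmquabla} directly on $\Lambda^k\overline{\mn}\otimes\mW_\lambda$, one line and done. Your detour is not intrinsically wrong, but the two extra translation steps as you have written them each contain an error, and the errors happen to cancel.

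Concretely, the ``reassembly'' you flag as the main obstacle has a sign problem. From the second form of Theorem \ref{Thmquabla}, splitting $\cC_2=\cC_2^{\mg_0}+\sum_a(X_aX_a^\dagger+X_a^\dagger X_a)$ gives
\[
\cC_2(\mV)-\cC_2+2\sum_a X_aX_a^\dagger \;=\; \cC_2(\mV)\;-\;\cC_2^{\mg_0}\;+\;\sum_a[X_a,X_a^\dagger]_{\mathrm{s}},
\]
and on a $\mg_0$-highest weight vector of weight $\nu$ the last two terms evaluate to $-\langle\nu,\nu+2\rho_{\mg_0}\rangle+\langle\nu,2\rho_\mn\rangle=-\langle\nu,\nu+2\rho_{\mg_0}-2\rho_\mn\rangle$. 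Because $\rho_\mg=\rho_{\mg_0}+\rho_\mn$, this is \emph{not} $-\cC_2(\mW_\nu)=-\langle\nu,\nu+2\rho_{\mg_0}+2\rho_\mn\rangle$; rather it equals $-\cC_2(\mW_{\mu})$ for the dual weight $\mu=-w_0^{\mg_0}\nu$ (since $w_0^{\mg_0}\rho_{\mg_0}=-\rho_{\mg_0}$ while $w_0^{\mg_0}$ fixes $\rho_\mn$). Already for $\mg=\mathfrak{gl}(2)$, $\Mp=\mb$, one checks by hand that on $\Lambda^0\mn\otimes\mV$ the quabla is $-E_{12}E_{21}$, with eigenvalue $\tfrac12\bigl(\cC_2(\mW_\mu)-\cC_2(\mV)\bigr)$, $\mu=-\nu$, which is \emph{not} $\tfrac12\bigl(\cC_2(\mW_\nu)-\cC_2(\mV)\bigr)$. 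Your closing claim that ``for $\mathfrak{gl}(m|n)$ the Casimir value is preserved under this duality'', i.e.\ $\cC_2(\mW_\nu)=\cC_2(\mW_\mu)$, is likewise false in general (again visible for $\mathfrak{gl}(2)$: $\cC_2(\mW_{(p,q)})-\cC_2(\mW_{(-p,-q)})=2(p-q)$). Since the correct primal-side scalar already reads $\cC_2(\mV)-\cC_2(\mW_\mu)$ with $\mu$ the dual $\mg_0$-weight, your second (incorrect) translation exactly compensates the first, which is why the final statement comes out right. Either fix the sign in the reassembly and drop the spurious duality claim, or do as the paper does and apply the quabla formula on the dual side $\Lambda^k\overline{\mn}\otimes\mW_\lambda$, where the eigenvalue on a $\mg_0$-highest weight vector of weight $\mu$ is directly $\cC_2(\mW_\lambda)-\cC_2(\mW_\mu)$.
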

\begin{proof}
The Kostant quabla operator on $\Lambda^k\overline{\mn}\otimes \mW_\lambda$ takes the dual form of the one in Theorem \ref{Thmquabla}. Therefore it can quickly be seen that this operator commutes with the action of $\mg_0$ and therefore acts as a scalar on an irreducible $\mg_0$-submodule of $\Lambda^k\overline{\mn}\otimes \mW_\lambda$. The action of $\Box$ on a highest weight vector of $\mg_0$ is given by the value $\cC_2(\mW_\lambda)-\cC_2(\mW_\mu)$.
\end{proof}
In the specific case $\mg_0=\mathfrak{gl}(m)\oplus\mathfrak{gl}(n)$, this was proved in Lemma 4.5 in \cite{MR2036954}.

\section{The BGG resolutions}
\label{BGG}
We continue to use the same notations as in the previous section and the introduction, with the assumption that the properties \eqref{neccond} are satisfied.
\begin{definition}
\label{defjet}
\rm For a $\Mp$-module $\mF$, the coinduced $\mg$-module $\cJ(\mF)$ is defined as a vector space by 
\begin{eqnarray*}
\mbox{Hom}_{\cU(\Mp)}(\cU(\mg),\mF)&=&\{\alpha\in \mbox{Hom}(\cU(\mg),\mF)|\alpha(U Z)=-(-1)^{|U||Z|}Z\left(\alpha(U)\right)\quad\mbox{for }Z\in\Mp\mbox{ and }U\in\cU(\mg)\}.
\end{eqnarray*}
The action of $\mg$ on $\cJ(\mF)$ is defined as $(A\alpha)(U)=-(-1)^{|A||U|}\alpha(AU)$ for $\alpha\in \cJ(\mF)$, $A\in\mg$ and $U\in\cU(\mg)$, which makes $\cJ(\mF)$ a $\mg$-submodule of Hom$(\cU(\mg),\mF)$.
 \end{definition}

We will need these spaces for the case $\mF=C^k(\overline{\mn},\mV)$ and submodules and subquotients. Note that it is important to write the brackets, for instance, with $Z\in\Mp$, $U\in\cU(\mg)$ and $\alpha\in\cJ\left(\mF\right)$, 
\[(Z\alpha)(U)=-(-1)^{|Z||U|}\alpha(ZU)\,\mbox{ is not the same as }\, Z(\alpha(U))=-(-1)^{|Z||U|}\alpha(UZ).\]

Since the operator $\partial^\ast$ is $\Mp$-invariant, it immediately extends to a $\mg$-invariant operator $\cJ\left(C^k(\overline{\mn},\mV)\right)\to \cJ\left(C^{k-1}(\overline{\mn},\mV)\right)$. The operator $\partial$ is not $\Mp$-invariant, see Corollary \ref{pactionder}, but it can be modified to an operator $d:\cJ\left(C^k(\overline{\mn},\mV)\right)\to \cJ\left(C^{k+1}(\overline{\mn},\mV)\right)$ which is $\mg$-invariant. This is the subject of the following theorem. In the language of differential operators on parabolic geometries, this corresponds to constructing a morphism between the first jet extension of $C^k(\overline{\mn},\mV)$ and $C^{k+1}(\overline{\mn},\mV)$, with scalar part equal to $\partial$. This is a twisted exterior derivative, as will be discussed in the Appendix.

\begin{theorem}
\label{thmdefop}
The operators
\begin{eqnarray*}
\partial^\ast:\cJ\left(C^k(\overline{\mn},\mV)\right)\to \cJ\left(C^{k-1}(\overline{\mn},\mV)\right)&& \left(\partial^\ast\alpha\right)(U)=\partial^\ast\left(\alpha(U)\right)\\
d:\cJ\left(C^k(\overline{\mn},\mV)\right)\to \cJ\left(C^{k+1}(\overline{\mn},\mV)\right)&&\left(d\alpha\right)(U)=\partial \left(\alpha(U)\right)+(k+1)\sum_a(-1)^{|U||X_a|} X_a\wedge \alpha(UX^\dagger_a)
\end{eqnarray*}
for $\alpha\in \cJ\left(C^k(\overline{\mn},\mV)\right)$ and $U\in\cU(\mg)$, with action of $\partial$ and $\partial^\ast$ on $C^k(\overline{\mn},\mV)$ as defined in Section \ref{Kostant} are $\mg$-module morphisms.
\end{theorem}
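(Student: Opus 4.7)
The task is to verify two claims for each of $\partial^\ast$ and $d$: first, that the output actually lies in $\cJ\left(C^{k\pm 1}(\overline{\mn},\mV)\right)$, i.e.\ satisfies the right $\cU(\Mp)$-equivariance of Definition \ref{defjet}; second, that the operator intertwines the natural $\mg$-action $(A\beta)(U)=-(-1)^{|A||U|}\beta(AU)$ on $\cJ$.

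For $\partial^\ast$ both claims are essentially formal. Well-definedness reduces to the already-noted $\Mp$-linearity of $\partial^\ast:C^k(\overline{\mn},\mV)\to C^{k-1}(\overline{\mn},\mV)$: for $Z\in\Mp$, $(\partial^\ast\alpha)(UZ)=\partial^\ast(\alpha(UZ))=-(-1)^{|U||Z|}\partial^\ast(Z\cdot\alpha(U))=-(-1)^{|U||Z|}Z\cdot(\partial^\ast\alpha)(U)$, and the $\mg$-equivariance is immediate because $\partial^\ast$ is applied pointwise in $U$ and is $\mC$-linear. For $d$, the $\mg$-equivariance also drops out by a direct expansion: in both $(d(A\alpha))(U)$ and $(A(d\alpha))(U)$ the Koszul exponent in front of $X_a\wedge\alpha(AUX_a^\dagger)$ simplifies to $|A||U|+|A||X_a|+|U||X_a|$, so no bracket identity is required for this step.

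The main content, and the main obstacle, is the $\cU(\Mp)$-equivariance of $d\alpha$: proving $(d\alpha)(UZ)=-(-1)^{|U||Z|}Z\cdot\left((d\alpha)(U)\right)$ for every $Z\in\Mp$. The plan is to expand both sides and match them term by term. On the left, $\partial(\alpha(UZ))=-(-1)^{|U||Z|}\partial(Z\cdot\alpha(U))$ is rewritten via Corollary \ref{pactionder}, producing a clean $Z\cdot\partial\alpha(U)$ together with the error term $(k+1)\sum_aX_a\wedge[X_a^\dagger,Z]_{\Mp}\cdot\alpha(U)$; the correction sum $(k+1)\sum_a(-1)^{(|U|+|Z|)|X_a|}X_a\wedge\alpha(UZX_a^\dagger)$ is unpacked by commuting $Z$ past $X_a^\dagger$ inside $\cU(\mg)$, splitting $[Z,X_a^\dagger]=[Z,X_a^\dagger]_{\overline{\mn}}+[Z,X_a^\dagger]_{\Mp}$, and invoking the $\cU(\Mp)$-equivariance of $\alpha$ on the $\Mp$-component and on the resulting $X_a^\dagger Z$. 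On the right, $Z\cdot((d\alpha)(U))$ is expanded via \eqref{gactionchain} into summands containing $[Z,X_a]\wedge\alpha(UX_a^\dagger)$ and $X_a\wedge Z\cdot\alpha(UX_a^\dagger)$. Three straightforward cancellations then occur: the two copies of $Z\cdot\partial\alpha(U)$; the $X_a\wedge[\cdot,Z]_{\Mp}\cdot\alpha(U)$ families, via $[X_a^\dagger,Z]=-(-1)^{|X_a||Z|}[Z,X_a^\dagger]$; and the $X_a\wedge Z\cdot\alpha(UX_a^\dagger)$ terms via the Koszul signs built into \eqref{gactionchain}.

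The one remaining, and crucial, cancellation is between
\begin{equation*}
(k+1)\sum_a(-1)^{(|U|+|Z|)|X_a|}X_a\wedge\alpha\left(U[Z,X_a^\dagger]_{\overline{\mn}}\right)\quad\mbox{and}\quad (k+1)(-1)^{|U||Z|}\sum_a(-1)^{|U||X_a|}[Z,X_a]\wedge\alpha(UX_a^\dagger).
\end{equation*}
This is exactly the content of the dual-basis identity \eqref{propdualbasis} applied with $A=Z$ and the sign weight $f(|X_a|)$ dictated by the bookkeeping above; the existence of a basis $\{X_a\}$ of $\mn$ with the transformation law encoded in \eqref{propdualbasis} is precisely what hypothesis \eqref{neccond} guarantees, so this is the step where the structural assumption on $(\mg,\Mp,\mV)$ really enters. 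Tracking all the Koszul signs between $X_a$, $X_a^\dagger$, $U$ and $Z$ and aligning the two surviving sums into the form of \eqref{propdualbasis} is the technical heart of the proof, but it is essentially mechanical once set up.
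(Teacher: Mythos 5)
Your proposal is correct and mirrors the paper's own argument: the paper also splits $d=\partial+T$, notes the pointwise $\mg$-equivariance is formal, and establishes $\cU(\Mp)$-equivariance by expanding $(T\alpha)(UZ)$ via Definition \ref{defjet}, the decomposition $[Z,X_a^\dagger]=[Z,X_a^\dagger]_{\overline{\mn}}+[Z,X_a^\dagger]_{\Mp}$, and equation \eqref{propdualbasis}, then matches against Corollary \ref{pactionder}. The only difference is presentational — you expand both sides of the equivariance identity and cancel, while the paper computes $(T\alpha)(UZ)$ in isolation and compares with Corollary \ref{pactionder} — but the ingredients and logic are the same.
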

\begin{proof}
The operator $\partial^\ast:\cJ\left(C^k(\overline{\mn},\mV)\right)\to \mbox{Hom}(\cU(\mg),C^{k-1}(\overline{\mn},\mV))$ is clearly a $\mg$-module morphism. It remains to be checked that $\mbox{im}(\partial^\ast)\subset \cJ\left(C^{k-1}(\overline{\mn},\mV)\right)$. This follows from the fact that for $Z\in\Mp$ and $U\in\cU(\mg)$ it holds that
\begin{eqnarray*}&&(\partial^\ast\alpha)(UZ)=\partial^\ast\left(\alpha(UZ)\right)=-(-1)^{|U||Z|}\partial^\ast\left(Z\cdot\alpha(U)\right)\\
&=&-(-1)^{|U||Z|}Z\cdot\partial^\ast\left(\alpha(U)\right)=-(-1)^{|U||Z|}Z\cdot\left(\left(\partial^\ast\alpha\right)(U)\right).
\end{eqnarray*}

The fact that the operator $d:\cJ\left(C^k(\overline{\mn},\mV)\right)\to \mbox{Hom}\left(\cU(\mg),\left(C^{k+1}(\overline{\mn},\mV)\right)\right)$ is $\mg$-invariant is also trivial. It is the sum of two $\mg$-invariant operators acting between $\cJ\left(C^k(\overline{\mn},\mV)\right)$ and $\mbox{Hom}(\cU(\mg),C^{k+1}(\overline{\mn},\mV))$. We define the operator $T$ as the second term in the definition of $d$, so $d=\partial+T$.

We need to compute that the image of $d$ is contained in $ \cJ\left(C^{k+1}(\overline{\mn},\mV)\right)$. It follows from Definition \ref{defjet} and equation \eqref{propdualbasis} that
\begin{eqnarray*}
\frac{1}{k+1}(T\alpha)(UZ)&=&\sum_a(-1)^{|Z||U|} X_a\wedge[X^\dagger_a,Z]_\Mp\cdot \left(\alpha(U)\right)-\sum_a (-1)^{(|X_a|+|Z|)|U|}[Z,X_a]\wedge \alpha(UX^\dagger_a)\\
&&-\sum_a(-1)^{|U|(|X_a|+|Z|)+|Z||X_a|} X_a\wedge Z\cdot(\alpha(UX^\dagger_a))\\
&=&\sum_a(-1)^{|Z||U|} X_a\wedge[X^\dagger_a,Z]_\Mp\cdot \left(\alpha(U)\right)- (-1)^{|U||Z|}\frac{1}{k+1}Z\left((T\alpha)(U)\right)
\end{eqnarray*}
holds. Comparison with Proposition \ref{pactionder} then yields the proof.
\end{proof}

This operator $d$ generates a coresolution of the module $\mV$, this is the subject of the following theorem.
\begin{theorem}
\label{Rhamcomplex}
The sequence
\begin{eqnarray*}
0\to\mV\to^{\epsilon} \cJ(\mV)\to^{d_0}\cJ(C^1(\overline{\mn},\mV))\to^{d_1}\cdots\to^{d_k}\cJ(C^{k+1}(\overline{\mn},\mV))\to^{d_{k+1}}\cdots
\end{eqnarray*}
with $\epsilon$ defined by $\left(\epsilon(v)\right)(U)=S(U)v$ with $S$ the principal anti-automorphism of $\cU(\mg)$ (the antipode as described in the Appendix) is a coresolution of $\mV$.
\end{theorem}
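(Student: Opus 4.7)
The plan is to recognize the sequence as a formal twisted de Rham complex on the opposite nilpotent radical, and then invoke a Poincar\'e-lemma-type acyclicity result; the latter is deferred to the appendix via Hopf-algebraic techniques.

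First I would verify that this is a complex. Injectivity of $\epsilon$ is immediate from $\epsilon(v)(1)=v$. The relation $d_0\circ\epsilon=0$ reduces, by evaluating at $U=1$ and using Lemma \ref{propstandder} for $\partial$ on $\mV=C^0(\overline{\mn},\mV)$ together with $S(X)=-X$, to a cancellation of two copies of $\sum_a X_a\otimes X_a^\dagger v$. For $d_{k+1}\circ d_k=0$, I split $d=\partial+T$ where $(T\alpha)(U)=(k+1)\sum_a(-1)^{|U||X_a|}X_a\wedge \alpha(UX_a^\dagger)$. Then $\partial^2=0$ is the adjoint of $(\partial^\ast)^2=0$; $T^2=0$ results from the supersymmetry of the wedge together with identity \eqref{propdualbasis}; and $\partial T+T\partial=0$ follows from Lemma \ref{propstandder} and a short computation exploiting the invariance built into property \eqref{neccond}.

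Next I would invoke the PBW theorem for Lie superalgebras to identify, as a vector space, $\cU(\mg)\cong\cU(\overline{\mn})\otimes\cU(\Mp)$. This yields
\[\cJ(\mF)\;\cong\;\mathrm{Hom}_{\mC}\bigl(\cU(\overline{\mn}),\mF\bigr)\]
for every $\Mp$-module $\mF$, which we read as formal $\mF$-valued functions on the opposite nilpotent radical. Under this identification the $k$-th term becomes $\mathrm{Hom}_{\mC}(\cU(\overline{\mn}),\Lambda^k\mn\otimes\mV)$, and $d_k$ becomes a twisted exterior derivative: its principal symbol is the standard Koszul differential on $\cU(\overline{\mn})\otimes\Lambda^\bullet\mn$, while the remaining terms encode the $\mV$-action and the non-commutativity of the bracket inside $\cU(\mg)$.

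The final step is a formal Poincar\'e lemma: this twisted de Rham complex is acyclic in positive degrees, and its zeroth cohomology is canonically $\mV$, realised precisely as the image of $\epsilon$. The main obstacle is proving acyclicity in the presence of the twist. The natural strategy is to filter $\mathrm{Hom}_{\mC}(\cU(\overline{\mn}),\Lambda^\bullet\mn\otimes\mV)$ by polynomial degree on $\cU(\overline{\mn})$, so that the associated graded differential becomes the standard Koszul differential, for which an explicit ``radial'' contracting homotopy is available; an inductive argument over the filtration then recovers full acyclicity and identifies the kernel of $d_0$ with the constant sheaf $\mV$. The cleanest way to handle the bookkeeping is via the coproduct and antipode of $\cU(\mg)$, which is precisely why the appendix develops the argument in Hopf-algebraic language.
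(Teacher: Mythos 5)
Your split $d=\partial+T$ is a natural move, but the claim that $T^2=0$ and $\partial T+T\partial=0$ vanish \emph{separately} is false for the general parabolics covered by Theorem~\ref{mainthmgl}. The piece $\partial^2=0$ is fine. However, evaluating $(T^2\alpha)(U)$ produces a sum $\sum_{a,b}(\pm)\,X_a\wedge X_b\wedge\alpha\bigl(UX_a^\dagger X_b^\dagger\bigr)$, and the super-antisymmetry of $X_a\wedge X_b$ kills only the part of $X_a^\dagger X_b^\dagger$ that is supersymmetric in $a,b$; the leftover is governed by $\tfrac{1}{2}[X_a^\dagger,X_b^\dagger]\in\overline{\mn}$, which is nonzero whenever $\overline{\mn}$ is nonabelian (this happens, e.g., for any $\Mp$ strictly between the Borel and the maximal parabolic with $\mg_0=\mathfrak{gl}(m)\oplus\mathfrak{gl}(n)$). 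The same bracket terms show up in $\partial T+T\partial$. What the paper actually does is compute $(dd\alpha)(U)$ in one go and observe that the surviving piece of $T^2$ cancels against $\partial T+T\partial$, with the cancellation supplied precisely by identity~\eqref{propdualbasis}. So the ingredients you name are the right ones, but the bookkeeping must combine the cross terms with the sub-leading part of $T^2$; each piece alone is nonzero.

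For the exactness, your PBW reduction is the one the appendix uses, but you elide a step and misdescribe the filtration. The paper first applies the isomorphism $\chi$ of Theorem~\ref{twisted} to strip off the $\mV$-dependence, reducing to the coefficient-free complex on $\cJ(\Lambda^\bullet\mn)$, and then rewrites $d$ \emph{exactly} (not just at the associated-graded level) via Theorem~\ref{extder} as an exterior derivative $\sum_a\theta_a\partial_{x_a}$ in a modified frame, citing Bernstein--Leites for acyclicity. Your spectral-sequence alternative is viable in principle, but filtering ``by polynomial degree on $\cU(\overline{\mn})$'' alone does not make the associated graded the Koszul differential: $T$ \emph{lowers} polynomial degree while $\partial$ preserves it, so the degree-preserving part of $d$ with respect to that filtration is $\partial$, not the Koszul operator, and $\partial$ is not acyclic. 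To get $T$ as the leading term you would need to filter by a mixed (polynomial plus form) degree and then verify convergence of the spectral sequence in this infinite-dimensional, formal setting. Either route can be made to work, but as written your filtration step would not close.

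Finally, a small point on ``$d_0\circ\epsilon=0$ reduces to evaluating at $U=1$'': this is correct, but uses that a $\mg$-module morphism into $\cJ(\mF)$ is determined by its composition with evaluation at $1\in\cU(\mg)$ (cogeneration of $\cJ(\mF)$); it is worth stating.
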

\begin{proof}
It needs to be proven that the sequence is a complex, $d_k\circ d_{k-1}=0$ and moreover that this complex is exact, ker$d_k=$im$d_{k-1}$.

First we prove that im$\epsilon=\ker d_0$. From the definition of $\epsilon(v)$ and Lemma \ref{propstandder} it follows that $d_0\circ\epsilon=0$. Now if $\alpha\in \cJ(\mV)$ is in the kernel of $d_0$, then if follows quickly that $\alpha(UY)=-(-1)^{|U||Y|}Y( \alpha(U))$ for $Y\in \overline{\mn}$. Together with Definition \ref{defjet} this implies that $\alpha(UA)=-(-1)^{|U||A|}A( \alpha(U))$ for all $A\in\mg$ or $\alpha(U)=S(U)\left(\alpha(1)\right)$, so im$\epsilon\cong\mV\cong \ker d_0$.

The fact that $d_k\circ d_{k-1}=0$ follows from a direct calculation using Lemma \ref{propstandder}, for $\alpha\in \cJ(C^{k-1}(\overline{\mn},\mV))$
\begin{eqnarray*}
(dd\alpha)(U)&=&\partial \left(d\alpha(U)\right)+(k+1)\sum_a(-1)^{|U||X_a|} X_a\wedge (d\alpha)(UX^\dagger_a)\\
&=&k\sum_a(-1)^{|U||X_a|}\partial \left(X_a\wedge \alpha(UX^\dagger_a)\right)+(k+1)\sum_a(-1)^{|U||X_a|} X_a\wedge \partial \alpha(UX^\dagger_a)\\
&&+k(k+1)\sum_{a,b}(-1)^{|U|(|X_a|+|X_b|)+|X_a||X_b|} X_a\wedge X_b\wedge \alpha(UX^\dagger_aX_b^\dagger)\\
&=&\frac{k(k+1)}{2} \sum_{a,b}(-1)^{|X_a||U|}X_b\wedge [X_b^\dagger,X_a]_{\mn}\wedge \alpha(UX^\dagger_a)\\
&&+\frac{k(k+1)}{2}\sum_{a,b}(-1)^{|U|(|X_a|+|X_b|)+|X_a||X_b|} X_a\wedge X_b\wedge \alpha(U[X^\dagger_a,X_b^\dagger]),
\end{eqnarray*}
which is zero by equation \eqref{propdualbasis}.

Because of Theorem \ref{twisted} in the Appendix the exactness can be reduced to the case where $\mV=0$. Through the vector space isomorphism $\cJ(\Lambda^k\mn)\cong$ Hom$(\cU(\overline{\mn}),\Lambda^k\mb)$ obtained from the Poincar\'e-Birkhoff-Witt property, the exactness of $d$ corresponds to the exactness of its induced operator Hom$(\cU(\overline{\mn}),\Lambda^k\mb)\to$ Hom$(\cU(\overline{\mn}),\Lambda^{k+1}\mb)$. Theorem \ref{extder} in the Appendix shows that the homology of the operator $d$ then becomes equivalent with that of the formal exterior derivative on a flat supermanifold. This is known to have trivial homology, see \cite{MR0672426}.
\end{proof}

At this stage we need to point out that $C^k(\overline{\mn},\mV)$ has a finite filtration as a $\Mp$-module. Since it is a star $\mg_0$-module, it decomposes into irreducible $\mg_0$-modules. The corresponding gradation follows from giving each simple root vector in $\mn$ degree one and the elements of $\mg_0$ degree zero. This gradation is naturally inherited by Hom$(\cU(\mg),C^k(\overline{\mn},\mV))$ where the gradation of a homomorphism is given by the gradation of its values. It will very useful for the sequel that the operators $\partial^\ast$ and $\partial$ have degree zero, while the operator $T$ raises the degree by one. The $\mg$-invariant operator related to the Kostant quabla operator is given by
\begin{eqnarray*}
\widetilde{\Box}&=&d\partial^\ast+\partial^\ast d=\Box+T\partial^\ast+\partial^\ast T \,:\,\, \cJ\left(C^k(\overline{\mn},\mV)\right)\to \cJ\left(C^k(\overline{\mn},\mV)\right)
\end{eqnarray*}
Since the operator $\Box_T=\Box-\widetilde{\Box}=-T\partial^\ast-\partial^\ast T$ raises degree on $\mbox{Hom}(\cU(\mg),C^k(\overline{\mn},\mV))$ it is nilpotent, this fact will be essential. Obviously also compositions of $\Box_T$ with operators of degree 0 are nilpotent.


\begin{lemma}
\label{inversequabla}
The operator $\widetilde{\Box}$ is invertible on {\rm $\mbox{im}\partial^\ast$}.
\end{lemma}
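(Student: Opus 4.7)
The plan is to exploit the bounded $\mg$-submodule filtration of $\cJ(C^k(\overline{\mn},\mV))$ induced from the grading of $C^k(\overline{\mn},\mV)$, combined with the pointwise Hodge decomposition of Lemma \ref{lemmaKostant}. First I would verify that $\mbox{im}\,\partial^\ast$ is $\widetilde{\Box}$-stable. Because $\partial^\ast\partial^\ast=0$, for $\beta=\partial^\ast\gamma$ one has
\[
\widetilde{\Box}\beta=(d\partial^\ast+\partial^\ast d)\partial^\ast\gamma=\partial^\ast(d\partial^\ast\gamma)\in\mbox{im}\,\partial^\ast,
\]
so the restriction is a well-defined $\mg$-module endomorphism of $\mbox{im}\,\partial^\ast$.

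Next, I would introduce the decreasing filtration $G_r\subset\cJ(C^k(\overline{\mn},\mV))$ consisting of those $\alpha$ whose values $\alpha(U)$ lie in grading degrees $\geq r$ of $C^k(\overline{\mn},\mV)$ for every $U\in\cU(\mg)$. This filtration is finite because the grading of $C^k(\overline{\mn},\mV)$ is bounded, each $G_r$ is a $\mg$-submodule, and $\mn$ acts trivially on the associated graded $G_r/G_{r+1}$ since $\mn$ strictly raises the grading degree. The operators $\partial$ and $\partial^\ast$ are of filtration degree zero while $T$ strictly raises the filtration, so $\widetilde{\Box}=\Box-\Box_T$ preserves each $G_r$ and the $\Box_T$ contribution vanishes on the associated graded; the induced operator on $G_r/G_{r+1}$ is simply $\Box$.

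The crucial observation is that $\Box$ acts pointwise in $U$ and, by Lemma \ref{lemmaKostant}, restricts at each value to a $\mg_0$-module automorphism of $\mbox{im}\,\partial^\ast\subset C^k(\overline{\mn},\mV)$. Because the $\mn$-action is trivial on the associated graded, this pointwise isomorphism upgrades to a $\Mp$-module isomorphism on the associated graded of $\mbox{im}\,\partial^\ast$. The standard argument for operators on bounded filtered modules then yields invertibility of $\widetilde{\Box}|_{\mbox{im}\,\partial^\ast}$: injectivity follows by comparing leading filtration terms and using injectivity of the associated graded map, surjectivity by iteratively correcting a preimage of $\beta\in\mbox{im}\,\partial^\ast\cap G_r$ with the graded isomorphism, a procedure that terminates because the filtration is bounded. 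Equivalently, the inverse is given on the graded pieces by the terminating Neumann series
\[
\widetilde{\Box}^{-1}=\sum_{j\geq 0}(\Box^{-1}\Box_T)^{j}\Box^{-1},
\]
where nilpotency of $\Box^{-1}\Box_T$ follows from the fact that $\Box_T$ strictly raises the bounded filtration.

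The main obstacle I anticipate is the $\Mp$-equivariance of the pointwise inverse $\Box^{-1}$: at the level of $\cJ(C^k(\overline{\mn},\mV))$ itself $\Box$ is only $\mg_0$-invariant, so a naive pointwise $\Box^{-1}$ need not give an element of $\cJ(C^k(\overline{\mn},\mV))$. The filtration bypasses this difficulty because on each associated graded piece the $\mn$-action is trivial, so $\mg_0$-equivariance suffices and the inversion becomes literal.
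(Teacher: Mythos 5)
Your proposal is correct and follows essentially the same route as the paper: the paper's proof also uses the terminating Neumann series $\sum_{j\ge 0}(\Box_T\Box^{-1})^{j}\Box^{-1}$, whose finiteness rests on the nilpotency of $\Box_T$ coming from the fact that it strictly raises the degree filtration — a point the paper establishes in the paragraph just before the lemma. Your extra elaboration on stability of $\mbox{im}\,\partial^\ast$ and on the equivariance subtlety is a sound expansion of the same idea rather than a different argument.
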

\begin{proof}
Lemma \ref{lemmaKostant} implies that $\Box$ is invertible on im$\Box$. Then it follows quickly that the finite sum
\begin{eqnarray*}
\sum_{j\ge 0}(\Box_T \Box^{-1})^{j}\Box^{-1}
\end{eqnarray*}
is the inverse of $\widetilde{\Box}$ on im$\Box$, so also on im$\partial^\ast$, by Lemma \ref{lemmaKostant}.
\end{proof}

\begin{definition}
The $\mg$-invariant operators $\Pi_k:\cJ(C^k(\overline{\mn},\mV))\to \cJ(C^k(\overline{\mn},\mV))$ are defined as
\begin{eqnarray*}
\Pi_k=1\,-\,\,d_{k-1}\circ \widetilde{\Box}^{-1}\circ \partial^\ast_k\,\,-\,\,\widetilde{\Box}^{-1}\circ \partial^\ast_{k+1}\circ d_k.
\end{eqnarray*}
\end{definition}
This is well-defined by Lemma \ref{inversequabla}. We introduce the notation $p_k$ for the $\mg$-invariant projection $\cJ(\mbox{ker}\partial^\ast_k)\to \cJ(H^k(\overline{\mn},\mV))$. Note that $\cJ(\mbox{ker}\partial^\ast_{k})$ is identical to the kernel of $\partial^\ast_{k}$ on $\cJ(C^{k}(\overline{\mn},\mV))$ and likewise for the image.

We also use the notation $repr_k$ for any $\mg$-module morphism from $\cJ(H^k(\overline{\mn},\mV))$ to $\cJ(\mbox{ker}\partial^\ast_k)$ that is inverted by $p_k$. The following properties of the operators $\Pi_k$ are now straightforward to derive.
\begin{lemma}
\label{propsplit}
The $\mg$-invariant operators $\Pi_k:\cJ(C^k(\overline{\mn},\mV))\to \cJ(C^k(\overline{\mn},\mV))$ satisfy
\begin{eqnarray*}
(1)\quad \Pi_k\circ\partial^\ast_{k+1}=0=\partial^\ast_k\circ\Pi_k&&(3)\quad \Pi_{k+1}\circ d_{k}=d_k\circ\Pi_k \\
(2)\qquad \quad\, p_k\circ \Pi_k\circ repr_k=1&&(4)\quad \Pi_k^2=\Pi_k.
\end{eqnarray*}
\end{lemma}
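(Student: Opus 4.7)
The plan is to derive all four identities by direct algebraic manipulation from the explicit formula for $\Pi_k$, exploiting two structural facts about $\widetilde{\Box}$. First, since $d^2=0$ and $(\partial^\ast)^2=0$, a one-line computation gives $\widetilde{\Box}\partial^\ast = \partial^\ast d\partial^\ast = \partial^\ast\widetilde{\Box}$ and likewise $\widetilde{\Box} d = d\widetilde{\Box}$, so $\widetilde{\Box}$ preserves $\mbox{im}\,\partial^\ast$; combined with Lemma \ref{inversequabla} this shows that $\widetilde{\Box}^{-1}$ also preserves $\mbox{im}\,\partial^\ast$. The basic cancellation that powers nearly every step is then $\partial^\ast\widetilde{\Box}^{-1}\partial^\ast = 0$, because $\widetilde{\Box}^{-1}\partial^\ast$ takes values in $\mbox{im}\,\partial^\ast\subset\ker\partial^\ast$. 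Closely related is the identity $\partial^\ast d\widetilde{\Box}^{-1}\partial^\ast = \partial^\ast$, obtained by replacing $\partial^\ast d$ with $\widetilde{\Box}-d\partial^\ast$ and then invoking the previous cancellation.

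For property (1), I would compute $\partial^\ast_k\Pi_k$: the middle term $\partial^\ast d\widetilde{\Box}^{-1}\partial^\ast$ equals $\partial^\ast$ by the identity just recorded, and the last term $\partial^\ast\widetilde{\Box}^{-1}\partial^\ast d$ vanishes, giving $\partial^\ast\Pi_k = \partial^\ast-\partial^\ast-0 = 0$. The dual identity $\Pi_k\partial^\ast_{k+1}=0$ is handled symmetrically, using $\partial^\ast d\partial^\ast = \widetilde{\Box}\partial^\ast$ together with $(\partial^\ast)^2 = 0$. For property (3), both $\Pi_{k+1}\circ d_k$ and $d_k\circ\Pi_k$ expand and collapse, using $d^2=0$, to the same expression $d_k - d_k\widetilde{\Box}^{-1}\partial^\ast_{k+1} d_k$.

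For property (4), it is cleanest to show $(1-\Pi_k)^2 = 1-\Pi_k$. Writing $1-\Pi_k = d\widetilde{\Box}^{-1}\partial^\ast + \widetilde{\Box}^{-1}\partial^\ast d$ and squaring produces four terms: one cross term vanishes by $d^2=0$ and the other by $\partial^\ast\widetilde{\Box}^{-1}\partial^\ast = 0$, while the two diagonal terms reproduce $d\widetilde{\Box}^{-1}\partial^\ast$ and $\widetilde{\Box}^{-1}\partial^\ast d$ respectively via $\partial^\ast d\widetilde{\Box}^{-1}\partial^\ast = \partial^\ast$. For property (2), given $\alpha\in\cJ(H^k(\overline{\mn},\mV))$ set $\beta = repr_k(\alpha)$; since $\partial^\ast_k\beta = 0$, the formula reduces to $\Pi_k\beta = \beta - \widetilde{\Box}^{-1}\partial^\ast_{k+1}d_k\beta$, whose correction term lies in $\mbox{im}\,\partial^\ast_{k+1}\subset\ker p_k$. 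Hence $p_k\Pi_k\beta = p_k\beta = \alpha$.

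There is no real obstacle here, only bookkeeping: at each occurrence the argument of $\widetilde{\Box}^{-1}$ must be verified to lie in $\mbox{im}\,\partial^\ast$ so that the composition is well-defined, and the only non-trivial algebraic ingredient is the commutation $\widetilde{\Box}\partial^\ast = \partial^\ast\widetilde{\Box}$ together with the cancellation $\partial^\ast\widetilde{\Box}^{-1}\partial^\ast = 0$ that it yields.
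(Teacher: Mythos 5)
Your argument is correct and is precisely the "straightforward" algebraic verification the paper alludes to but omits: it isolates the two crucial cancellations $\partial^\ast\widetilde{\Box}^{-1}\partial^\ast=0$ and $\partial^\ast d\,\widetilde{\Box}^{-1}\partial^\ast=\partial^\ast$ (together with the commutation $\widetilde{\Box}\partial^\ast=\partial^\ast\widetilde{\Box}$ ensuring $\widetilde{\Box}^{-1}$ preserves $\mathrm{im}\,\partial^\ast$), and all four properties then fall out by expansion, exactly as in the Calderbank--Diemer machinery the paper is following. The only detail worth making explicit when writing this up is that applying $p_k$ in (2) is legitimate because property (1) already shows $\mathrm{im}\,\Pi_k\subseteq\cJ(\ker\partial^\ast_k)$, which you do note.
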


Since the composition $\Pi_k\circ repr_k$ does not depend on the choice of $repr_k$ by Lemma \ref{propsplit} $(1)$ we fix the notation 
\[L_k=\Pi_k\circ repr_k\,:\,\, \cJ(H^k(\overline{\mn},\mV))\to\cJ(\ker\partial^\ast).\]
It follows from lemma \ref{propsplit} $(2)$ that $p_k\circ L_k=1$, so in particular the $\mg$-invariant operator $L_k$ is injective.

Now we come to the definition of the operator that will be responsible for the desired coresolution of $\mV$.
\begin{definition}
\label{defD}
The $\mg$-invariant operator $D_k: \cJ(H^k(\overline{\mn},\mV))\to  \cJ(H^{k+1}(\overline{\mn},\mV))$ is defined as
\begin{eqnarray*}
D_k&=&p_{k+1}\circ d_k\circ L_k\,=\,p_{k+1}\circ \Pi_{k+1}\circ d_k\circ \Pi_k\circ repr_k\,=\,p_{k+1}\circ \Pi_{k+1}\circ d_k\circ repr_k.
\end{eqnarray*}
\end{definition}
The proposed formulas are equivalent by Lemma \ref{propsplit} $(3)$ and $(4)$. From the definition it immediately follows that $L_{k+1}\circ D_k=d_k\circ L_k$.

Using the defined operators it is now possible to construct a smaller sequence out of the twisted de Rham sequence, that corresponds to the infinitesimal version of the classical dual BGG sequences, mentioned in Section \ref{classical}.
\begin{theorem}
\label{BGGseq}
The sequence
\begin{eqnarray*}
0\to\mV\to^{\epsilon'} \cJ(H^0(\overline{\mn},\mV))\to^{D_0}\cJ(H^1(\overline{\mn},\mV))\to^{D_1}\cdots\to^{D_k}\cJ(H^{k+1}(\overline{\mn},\mV))\to^{D_{k+1}}\cdots
\end{eqnarray*}
with $\epsilon'$ defined as $\epsilon' (v)=p_0(\epsilon(v))$ and $\epsilon$ given in Theorem \ref{Rhamcomplex} is a coresolution of $\mV$.
\end{theorem}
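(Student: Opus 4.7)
The plan is to realise the BGG sequence as the restriction of the twisted de Rham coresolution of Theorem~\ref{Rhamcomplex} to a $\mg$-equivariant direct summand carved out by the idempotents $\Pi_k$, so that exactness is inherited almost formally. First I would check that the $D_k$ form a complex. Combining Definition~\ref{defD} with Lemma~\ref{propsplit} gives the identity $L_{k+1}\circ D_k = d_k\circ L_k$ recorded after Definition~\ref{defD}, so iterating yields $L_{k+2}\circ D_{k+1}\circ D_k = d_{k+1}\circ d_k\circ L_k = 0$; since $p_{k+2}\circ L_{k+2}=1$ (Lemma~\ref{propsplit}(2)) forces $L_{k+2}$ to be injective, one concludes $D_{k+1}\circ D_k = 0$.

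Next I would exploit Lemma~\ref{propsplit}(4), which makes each $\Pi_k$ a $\mg$-module idempotent on $\cJ(C^k(\overline{\mn},\mV))$, together with the commutation $\Pi_{k+1}\circ d_k = d_k\circ\Pi_k$ from (3). These produce a $d$-invariant decomposition $\cJ(C^k(\overline{\mn},\mV)) = \mathrm{im}\,\Pi_k \oplus \ker\Pi_k$, realising the twisted de Rham complex as the direct sum of the two subcomplexes $(\mathrm{im}\,\Pi_\bullet,d_\bullet)$ and $(\ker\Pi_\bullet,d_\bullet)$. Using $\mathrm{im}\,\Pi_k\subset\cJ(\ker\partial^\ast_k)$ from Lemma~\ref{propsplit}(1), the maps $p_k$ and $L_k$ restrict to mutually inverse $\mg$-module isomorphisms $\mathrm{im}\,\Pi_k\cong\cJ(H^k(\overline{\mn},\mV))$: indeed $p_k\circ L_k=1$, while $L_kp_k\alpha=\alpha$ for $\alpha\in\mathrm{im}\,\Pi_k$ follows because $\alpha-L_kp_k\alpha$ lies simultaneously in $\mathrm{im}\,\Pi_k$ (on which $\Pi_k$ is the identity) and in $\ker p_k=\cJ(\mathrm{im}\,\partial^\ast_{k+1})$ (on which $\Pi_k$ vanishes by Lemma~\ref{propsplit}(1)). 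Under this identification $d_k|_{\mathrm{im}\,\Pi_k}$ is exactly $D_k$.

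To handle the augmentation, note that $\partial^\ast_0=0$ together with $d_0\circ\epsilon=0$ from Theorem~\ref{Rhamcomplex} forces $\Pi_0\circ\epsilon=\epsilon$, so $\epsilon$ lands in $\mathrm{im}\,\Pi_0$ and one has $\epsilon=L_0\circ\epsilon'$; injectivity of $\epsilon'$ is then inherited from that of $\epsilon$. For exactness at degree $k\ge 1$, take $\alpha\in\mathrm{im}\,\Pi_k$ with $d_k\alpha=0$; Theorem~\ref{Rhamcomplex} produces $\gamma$ with $\alpha=d_{k-1}\gamma$, and then $\Pi_{k-1}\gamma\in\mathrm{im}\,\Pi_{k-1}$ satisfies $d_{k-1}(\Pi_{k-1}\gamma)=\Pi_k(d_{k-1}\gamma)=\Pi_k\alpha=\alpha$. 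At degree $0$, any $d_0$-closed element of $\mathrm{im}\,\Pi_0$ is of the form $\epsilon(v)$ by Theorem~\ref{Rhamcomplex}, and transporting through $p_0$ yields exactness at $\cJ(H^0(\overline{\mn},\mV))$.

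The main obstacle is conceptual rather than computational: the content of Lemma~\ref{propsplit} must be recognised as exactly the statement that $\Pi_\bullet$ is a chain-map idempotent whose image is a subcomplex of the twisted de Rham complex pointwise identified with the BGG complex. Once this is in place the exactness of the latter is inherited diagram-chase style from that of the former; the only slightly delicate point is that $L_k\circ p_k$ restricts to the identity on $\mathrm{im}\,\Pi_k$ regardless of the choice of representative map used to define $L_k$.
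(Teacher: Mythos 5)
Your argument is correct and is a genuinely different (and cleaner) route than the paper's. Both proofs begin the same way, deducing $D_{k+1}\circ D_k=0$ from $L_{\bullet+1}\circ D_\bullet=d_\bullet\circ L_\bullet$ and injectivity of $L_\bullet$. For exactness the paper stays on the $\cJ(H^\bullet)$ side: from $D_{k+1}f=0$ it produces a twisted de Rham primitive $g$ of $L_{k+1}(f)$, replaces it by $g'=g+d_{k-1}h\in\cJ(\ker\partial^\ast)$ using the Hodge decomposition and the degree-raising property of $T=d-\partial$, and then proves $g'=L_k\circ p_k(g')$ by showing $\Box\bigl(g'-L_kp_kg'\bigr)$ strictly raises degree while $\Box$ is degree-zero and invertible on $\mathrm{im}\,\partial^\ast$. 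You instead package the content of Lemma~\ref{propsplit} as the statement that $\Pi_\bullet$ is a chain-map idempotent, so that $(\mathrm{im}\,\Pi_\bullet,d_\bullet)$ is a direct summand subcomplex of the twisted de Rham complex and exactness from Theorem~\ref{Rhamcomplex} descends automatically, reducing everything to identifying this summand with $(\cJ(H^\bullet),D_\bullet)$ via $p_\bullet$ and $L_\bullet$. Your verification that $L_k\circ p_k$ is the identity on $\mathrm{im}\,\Pi_k$ — because $\alpha-L_kp_k\alpha$ lies in $\mathrm{im}\,\Pi_k\cap\ker p_k$, $\ker p_k=\cJ(\mathrm{im}\,\partial^\ast_{k+1})$, and $\Pi_k$ annihilates the latter by Lemma~\ref{propsplit}(1) together with the remark that $\cJ(\mathrm{im}\,\partial^\ast)$ coincides with the image of $\partial^\ast$ on $\cJ(C^{k+1})$ — replaces the paper's $\Box$-degree argument for its particular $g'$; that degree argument is in effect what makes $\widetilde{\Box}^{-1}$, hence $\Pi_k$, well-defined in the first place, so the two are morally equivalent. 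What your route buys is a transparent global picture (the BGG complex is literally a direct summand of the twisted de Rham complex) and it avoids the auxiliary modification $g\mapsto g'$; the paper's more hands-on derivation constructs the preimage $p_k(g')$ explicitly without ever articulating the splitting principle.
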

\begin{proof}
First we prove that this forms a complex,
\begin{eqnarray*}
D_{k+1}\circ D_{k}&=&p_{k+2}\circ \Pi_{k+2}\circ d_{k+1}\circ \Pi_{k+1}\circ repr_{k+1}\circ p_{k+1}\circ \Pi_{k+1}\circ d_k\circ \Pi_k\circ repr_k\\
&=&p_{k+2}\circ \Pi_{k+2}\circ d_{k+1}\circ \Pi_{k+1}\circ  \Pi_{k+1}\circ d_k\circ \Pi_k\circ repr_k\\
&=&p_{k+2}\circ \Pi_{k+2}\circ d_{k+1}\circ d_k\circ \Pi_k\circ repr_k=0
\end{eqnarray*}
where we have used the fact that $\Pi_{k+1}$ does not see which representative is chosen by Lemma \ref{propsplit} $(1)$, then Lemma \ref{propsplit} $(4)$ and $(3)$ and finally Theorem \ref{Rhamcomplex}. The fact $D_0\circ\epsilon'=0$ follows similarly.

Now we prove the exactness of the complex. The fact that im$\epsilon'=\ker D_0$ follows from the property that $\epsilon'$ is injective (since $\mV$ is irreducible) and the fact that $L_0$ maps the kernel of $D_0$ injectively into the kernel of $d_0$ which is isomorphic to $\mV$, see Theorem \ref{Rhamcomplex}.

We consider an $f\in \cJ(H^{k+1}(\overline{\mn},\mV))$ that satisfies $D_{k+1}f=0$. Since $L_{k+2}\circ D_{k+1}=d_{k+1}\circ L_{k+1}$ it follows that $L_{k+1}(f)= d_{k}g$ for some $g\in \cJ(C^{k}(\overline{\mn},\mV))$ by Theorem \ref{Rhamcomplex}.

It can be proven that there is some $h\in \cJ(C^{k-1}(\overline{\mn},\mV))$, such that $g+d_{k-1}h$ is inside $\cJ(\ker\partial^\ast)$ by using Lemma \ref{lemmaKostant} which implies $C^k(\overline{\mn},\mV)=\mbox{im}\partial_k\oplus \ker \partial^\ast_k$ and the fact that $d-\partial=T$ strictly raises degree. Therefore we obtain $L_{k+1}(f)= d_{k}g'$ for $g'\in\cJ(\ker\partial^\ast)$.

Since $d_{k}g'=L_{k+1}(f)$, $d_kg'$ is also inside $\cJ(\ker\partial^\ast)$. We can prove that the relation $g'=L_{k}\circ p_{k}(g')$ holds as follows. The element $g'-L_{k}\circ p_{k}(g')$ of $\cJ(\ker\partial^\ast)$ is inside $\cJ($im$\partial^\ast)$ since its projection onto $\cJ(H^{k}(\overline{\mn},\mV))$ is zero by the relation $p_{k}\circ L_k=1$. Because $\Box$ is invertible on $\mbox{im}\partial^\ast$ by Lemma \ref{lemmaKostant}, we can prove $g'-L_{k}\circ p_{k}(g')=0$ by calculating
\begin{eqnarray*}
\Box(g'-L_{k}\circ p_{k}(g'))&=&(\partial^\ast d_k-\partial^\ast T_k)(g'-L_{k}\circ p_{k}(g'))\\
&=&\partial^\ast L_{k+1}(f)-\partial^\ast L_{k+1}\circ D_k\circ p_{k}(g')-\partial^\ast T_k(g'-L_{k}\circ p_{k}(g'))\\
&=&-\partial^\ast T_k(g'-L_{k}\circ p_{k}(g')).
\end{eqnarray*}
The operator $\Box$ is of degree zero while its action above strictly raises the degree, so the only possible option is that the action of this invertible operator is zero. This shows that $g'=L_{k}\circ p_{k}(g')$ holds.

Therefore we obtain that if $D_{k+1}f=0$ holds for $f\in \cJ(H^{k+1}(\overline{\mn},\mV))$, then $L_{k+1}(f)=L_{k+1}\circ D_k \circ p_k(g')$ holds. By injectivity of $L_{k+1}$ it follows that $f\in$ im$D_{k}$ and the theorem is proven.
\end{proof}

This leads to the main theorem of this paper. We use the notation $V^\mF$ for any $\mg$-module induced from a $\Mp$-module $\mF$. We only use the terminology (generalized) Verma module in case $\mF$ is irreducible.
\begin{theorem}
\label{finalthm}
For a semisimple Lie superalgebra $\mg$ with irreducible finite dimensional representation $\mW$ and a parabolic subalgebra $\Mp=\mg_0+\mn$, such that the triple $(\mg,\Mp,\mW)$ satisfies conditions \eqref{neccond2}, the module $\mW$ has a resolution in terms of generalized Verma modules given by
\begin{eqnarray*}
\cdots \to V^{H^k(\mn,\mW)}\to\cdots\to V^{H^1(\mn,\mW)}\to V^{H^0(\mn,\mW)}\to\mW\to0
\end{eqnarray*}
with Kostant cohomology group {\rm $H^k(\mn,\mW)=\ker\delta^\ast_{k}/\mbox{im}\delta^\ast_{k+1}$}, which is a finite dimensional completely reducible $\Mp$-module (described in Theorem \ref{structure cohomgroups}), and $V^{H^k(\mn,\mW)}=\cU(\mg)\otimes_\Mp H^k(\mn,\mW)$.
\end{theorem}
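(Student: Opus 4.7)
The plan is to obtain the resolution of $\mW$ by dualizing the coresolution of $\mW^\ast$ provided by Theorem \ref{BGGseq}. First, by Proposition \ref{stardual}, whenever $(\mg,\Mp,\mW)$ satisfies \eqref{neccond2} the triple $(\mg,\Mp,\mW^\ast)$ satisfies \eqref{neccond}, so setting $\mV=\mW^\ast$, Theorem \ref{BGGseq} yields an exact sequence of $\mg$-modules
\begin{equation*}
0\to\mW^\ast\xrightarrow{\epsilon'}\cJ(H^0(\overline{\mn},\mW^\ast))\xrightarrow{D_0}\cJ(H^1(\overline{\mn},\mW^\ast))\xrightarrow{D_1}\cdots.
\end{equation*}
Corollary \ref{Hcompred} combined with the finite dimensionality of $\Lambda^k\mn\otimes\mW^\ast$ guarantees that each $H^k(\overline{\mn},\mW^\ast)$ is a finite dimensional completely reducible $\Mp$-module.

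Second, I would set up a BGG-type duality between coinduced and induced modules in this super setting. For a finite dimensional $\Mp$-module $\mE$, the Poincar\'e-Birkhoff-Witt decomposition $\cU(\mg)\cong\cU(\overline{\mn})\otimes\cU(\Mp)$ identifies $\cJ(\mE)\cong\mbox{Hom}(\cU(\overline{\mn}),\mE)$ as vector spaces, which in turn admits a weight decomposition with finite dimensional weight spaces with respect to the Cartan subalgebra inside $\mg_0$. The weight-space restricted dual then defines a contravariant exact functor on this class of modules, and a direct calculation in the spirit of Definition \ref{defjet} identifies $\cJ(\mE)^{\vee}\cong\cU(\mg)\otimes_{\cU(\Mp)}\mE^\ast=V^{\mE^\ast}$. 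Applied to the coresolution above, this duality reverses arrows and, using $(\mW^\ast)^{\vee}\cong\mW$ since $\mW$ is finite dimensional, produces an exact sequence
\begin{equation*}
\cdots\to V^{(H^1(\overline{\mn},\mW^\ast))^\ast}\to V^{(H^0(\overline{\mn},\mW^\ast))^\ast}\to\mW\to 0.
\end{equation*}
Lemma \ref{dualcohom} then identifies $(H^k(\overline{\mn},\mW^\ast))^\ast\cong H^k(\mn,\mW)$ as $\Mp$-modules (with trivial $\mn$-action), completing the resolution in the stated form.

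The main obstacle is the second step: correctly setting up the restricted-dual functor in the super setting with all parity signs tracked, and verifying that the dual of $\cJ(\mE)$ is indeed the generalized Verma module $V^{\mE^\ast}$. Once this identification is established, exactness of the resulting resolution is immediate from the exactness of the coresolution in Theorem \ref{BGGseq}, because weight-wise dualization is an exact functor on modules with finite dimensional weight spaces. Additional care is needed to recognize that the terminal map $V^{H^0(\mn,\mW)}\to\mW$ obtained by dualizing $\epsilon'$ coincides with the canonical projection $\cU(\mg)\otimes_\Mp\mW^{\mn}\twoheadrightarrow\mW$, which follows directly from the explicit description of $\epsilon$ via the antipode in Theorem \ref{Rhamcomplex}.
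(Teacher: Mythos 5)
Your proposal is correct and takes essentially the same route as the paper: dualize the coresolution of Theorem \ref{BGGseq} applied to $\mV=\mW^\ast$, identify the dual of $\cJ(\mF)$ with the induced module $V^{\mF^\ast}$, and finish with Lemma \ref{dualcohom} and Corollary \ref{Hcompred}. The paper realizes the duality you invoke abstractly (weight-wise restricted dual) by writing down an explicit $\mg$-invariant non-degenerate pairing $(U\otimes\alpha,\phi)=(-1)^{|U||\alpha|}\alpha(\phi(U))$ between $\cU(\mg)\otimes_\Mp\mF^\ast$ and $\mbox{Hom}_{\cU(\Mp)}(\cU(\mg),\mF)$ and passing to adjoints $D_k^\ast$; your formulation in terms of an exact contravariant functor on modules with finite-dimensional weight spaces is the same content and, if anything, makes explicit why the orthogonality/closure argument is legitimate in this infinite-dimensional setting.
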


\begin{proof}
This is a consequence of Theorem \ref{BGGseq} and the fact that there exists a non-degenerate bilinear $\mg$-invariant paring between 
\[V^{\mF^\ast}=\cU(\mg)\otimes_{\Mp}\mF^\ast \mbox{ and } \cJ(\mF)=\mbox{Hom}_{\cU(\Mp)}(\cU(\mg),\mF)\]
for any irreducible $\Mp$-representation $\mF$.

This pairing is induced by the $\mg$-invariant pairing between $\cU(\mg)\otimes\mF^\ast$ and $\mbox{Hom}(\cU(\mg),\mF)$ given by
\begin{eqnarray*}
(U\otimes\alpha,\phi)=(-1)^{|U||\alpha|}\alpha\left(\phi(U)\right)&\mbox{for}&U\in\cU(\mg), \,\alpha\in\mF^\ast\mbox{ and }\phi\in\mbox{Hom}(\cU(\mg),\mF).
\end{eqnarray*}
This pairing is clearly non-degenerate and in this context $\mg$-invariant means $(Au,\phi)=-(-1)^{|A||u|}(u,A\phi)$ for $A\in\mg$, $u\in\cU(\mg)\otimes\mF^\ast$ and $\phi\in\mbox{Hom}(\cU(\mg),\mF)$. Now we restrict the paring to the two subrepresentations we are interested in. We show that this pairing is still non-degenerate. For every $\phi\in \cJ(\mF)$ there is an element of $\cU(\mg)\otimes\mF^\ast$ which has a non-zero evaluation on $\phi$. By first restricting the element to $\cU(\overline{\mn})\otimes\mF^\ast$ and seeing it as an element of $V^{\mF^\ast}$ it follows that this element has the same evaluation on $\cJ(\mF)$ as the original one. The proof of the non-degeneracy in the other direction is similar. From this non-degenerate pairing it follows that if two subspaces of $V^{\mF^\ast}$ have the same space of orthogonal vectors inside $\cJ(\mF)$ they must coincide. 

Since the triple $(\mg,\Mp,\mW)$ satisfies conditions \eqref{neccond2}, the triple $(\mg,\Mp,\mV)$ with $\mV=\mW^\ast$ satisfies conditions \eqref{neccond}, so we can apply Theorem \ref{BGGseq}.

The $\mg$-invariant operators 
\[D^\ast_k:\mV^{H^k(\overline{\mn},\mV)^\ast}\to\mV^{H^{k-1}(\overline{\mn},\mV)^\ast}\]
defined by $(D_k^\ast u,\phi)=(u,D_k\phi)$ with $D_k$ from Definition \ref{defD} for all $u\in\mV^{H^k(\overline{\mn},\mV)^\ast}$ and $\phi\in \cJ(H^k(\overline{\mn},\mV))$ then form an exact complex by the considerations above since for instance $\ker D^\ast_k$ corresponds to space of orthogonal vectors of im$D_k$.

The proof then follows from the identification $H^j(\mn,\mV^\ast)\cong H^j(\overline{\mn},\mV)^\ast$ in Lemma \ref{dualcohom} and Corollary \ref{Hcompred}.
\end{proof}

Contrary to the classical case of Lepowsky the resolution is not finite since the homolgy groups do not vanish (because there is no top power for super anti-symmetric tensors). The method of super-duality gives a nice interpretation of this infinite behavior by identifying the weights of the highest weight vectors in $H^j(\mn,\mW_\lambda)$ with orbits of the parabolic Weyl group of the Lie algebra $\mathfrak{gl}(m+\infty)$, see e.g. \cite{Cheng, MR2036954}. If one ignores the superduality with $\mathfrak{gl}(m+\infty)$ and only looks at the Lie superalgebra $\mathfrak{gl}(m|n)$ the highest weights come from both reflections of the parabolic Weyl group of $\mathfrak{gl}(m)\oplus\mathfrak{gl}(n)$ as well as from so-called odd reflections. 

As a consequence of the obtained BGG resolutions we have the following corollary which is a non-trivial observation in the case of Lie superalgebras.
\begin{corollary}
\label{submodVerma}
Consider $\mg=\mathfrak{gl}(m|n)$ and $\Mp$ a parabolic subalgebra that contains $\mathfrak{gl}(n)$. For $\lambda\in\mathfrak{h}^\ast$ the highest weight of a tensor module of $\mathfrak{gl}(m|n)$, define $M(\lambda)$ as the $\mg_0$-module with highest weight $\lambda$ which is a $\Mp$-module with trivial $\mn$-action. The generalized Verma module
\begin{eqnarray*}
V^{M(\lambda)}&=&\cU(\mg)\otimes_{\Mp}M(\lambda)
\end{eqnarray*}
satisfies the property that its unique maximal submodule is generated by highest weight vectors in $V^{M(\lambda)}$.
\end{corollary}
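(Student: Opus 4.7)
The plan is to use the initial terms of the BGG resolution of Theorem \ref{finalthm} applied to the tensor module $\mW_\lambda$ to realize $V^{M(\lambda)}$ as a $\mg$-direct summand of $V^{H^0(\mn, \mW_\lambda)}$ with irreducible quotient $\mW_\lambda$, and then to describe the maximal submodule through the boundary map from $V^{H^1(\mn, \mW_\lambda)}$.

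Since $\Mp \supset \mathfrak{gl}(n)$, Theorem \ref{tensor} gives that $(\mathfrak{gl}(m|n), \Mp, \mW_\lambda)$ meets conditions \eqref{neccond2}, so Theorem \ref{finalthm} furnishes the exact sequence
\begin{equation*}
V^{H^1(\mn, \mW_\lambda)} \xrightarrow{\phi} V^{H^0(\mn, \mW_\lambda)} \xrightarrow{\alpha} \mW_\lambda \to 0.
\end{equation*}
The $\lambda$-weight vector of $\mW_\lambda$ descends to a non-zero vector of $H^0(\mn, \mW_\lambda) \cong \mW_\lambda / \overline{\mn} \mW_\lambda$ whose $\mg_0$-span is isomorphic to $M(\lambda)$. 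By the $\mg_0$-complete reducibility of $H^0$ (from Lemma \ref{lemmaKostant} together with the star structure) and the trivial $\mn$-action (Corollary \ref{Hcompred}), $M(\lambda)$ is a $\Mp$-direct summand of $H^0(\mn, \mW_\lambda)$; and since induction from $\Mp$ to $\mg$ preserves direct sums, $V^{H^0(\mn, \mW_\lambda)} = V^{M(\lambda)} \oplus V^R$ as $\mg$-modules.

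A weight count then shows that the composition $V^{M(\lambda)} \hookrightarrow V^{H^0(\mn, \mW_\lambda)} \xrightarrow{\alpha} \mW_\lambda$ is surjective: every weight of $V^{H^1(\mn, \mW_\lambda)}$ is strictly below $\lambda$, since $H^1$ is a subquotient of $\overline{\mn}\otimes\mW_\lambda$ and further induction via $\cU(\overline{\mn})$ only lowers weights, so $\ker\alpha = \phi(V^{H^1(\mn, \mW_\lambda)})$ contains no $\lambda$-weight vector. The $\lambda$-weight generator of $V^{M(\lambda)}$ therefore maps to a non-zero element of the irreducible $\mW_\lambda$, giving surjectivity. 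Because $V^{M(\lambda)}$ is a highest weight module with one-dimensional $\lambda$-weight space, the kernel $N$ of this surjection is the unique maximal submodule.

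The principal obstacle is to show that $N$ is generated by highest weight vectors of $V^{M(\lambda)}$. Splitting $\phi = (\phi_0, \phi_1)$ according to the direct-summand projections onto $V^{M(\lambda)}$ and $V^R$, a short diagram chase identifies $N = \phi_0(\ker \phi_1)$. Decomposing further $V^{H^1(\mn, \mW_\lambda)} = \bigoplus_j V^{N_j}$ by the $\mg_0$-irreducible summands $N_j$ of $H^1$, each $V^{N_j}$ is a generalized Verma module generated over $\cU(\mg)$ by the $\mg_0$-highest weight vector of $N_j$, itself a $\mg$-highest weight vector in $V^{H^1(\mn, \mW_\lambda)}$. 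The crucial remaining step is to show that the submodule $\ker \phi_1 \subset V^{H^1(\mn, \mW_\lambda)}$ is generated as a $\cU(\mg)$-module by its highest weight vectors; after that, applying $\phi_0$ produces highest weight vectors of $V^{M(\lambda)}$ lying in $N$ that generate $N$. This last step rests on the general fact that every submodule of a highest weight module for $\mathfrak{gl}(m|n)$ in the category $\cO$ is generated by its highest weight vectors, which itself follows by induction on weight from the maximal-weight argument, using that weights in our modules are bounded above by the generating weight and weight spaces are finite-dimensional.
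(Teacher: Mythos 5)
Your proof breaks down at the final and most important step, which is in fact circular. You appeal to ``the general fact that every submodule of a highest weight module for $\mathfrak{gl}(m|n)$ in the category $\cO$ is generated by its highest weight vectors,'' claiming it ``follows by induction on weight from the maximal-weight argument.'' This is not a general fact for Lie superalgebras — indeed the paper explicitly flags the corollary as ``a non-trivial observation in the case of Lie superalgebras.'' If such a statement were available, the corollary itself would be immediate and the whole reduction through the BGG resolution unnecessary. The proposed weight-induction also does not work: given a submodule $W$ and a maximal-weight vector $w_1\in W$, the preimage in $W$ of a highest weight vector in $W/\cU(\mg)w_1$ need not itself be a highest weight vector in $W$, so the recursion cannot be closed. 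This is precisely the phenomenon that makes the conclusion nontrivial.

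You have also made the argument harder than it needs to be by only showing that $M(\lambda)$ is a $\Mp$-direct summand of $H^0(\mn,\mW_\lambda)$, introducing the spurious complement $V^R$ and the decomposition $\phi=(\phi_0,\phi_1)$. In fact $H^0(\mn,\mW_\lambda)\cong\mW_\lambda/\overline{\mn}\mW_\lambda\cong M(\lambda)$ on the nose, so the tail of Theorem~\ref{finalthm} reads
\begin{equation*}
V^{H^1(\mn,\mW_\lambda)}\xrightarrow{\ \phi\ } V^{M(\lambda)}\xrightarrow{\ \alpha\ }\mW_\lambda\to 0,
\end{equation*}
and the maximal submodule is $N=\ker\alpha=\operatorname{im}\phi$ directly. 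Now $V^{H^1(\mn,\mW_\lambda)}=\bigoplus_j V^{N_j}$ with each generalized Verma module $V^{N_j}$ generated over $\cU(\mg)$ by a single highest weight vector $v_j$. Since $\phi$ is a $\mg$-module morphism, each image $\phi(v_j)$ is again a highest weight vector (or zero) in $V^{M(\lambda)}$, and $N=\operatorname{im}\phi=\sum_j\cU(\mg)\,\phi(v_j)$ is therefore generated by highest weight vectors — with no appeal to any structural property of submodules of Verma modules. Note that the generation happens on the source side via the Verma decomposition, not on the target side where no such fact is available; this is the step your argument had to produce and could not.
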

\begin{proof}
This follows from Theorem \ref{tensor} and the last part of the BGG resolution in Theorem \ref{finalthm}. The sequence
\begin{eqnarray*}
V^{H^1(\mn,\mV_\lambda)}\to V^{M(\lambda)}\to\mV_\lambda\to0
\end{eqnarray*}
is exact, where the property $M(\lambda)\cong H^0(\mn,\mV_\lambda)$ follows straight away. This implies that $\mV_\lambda\cong  V^{M(\lambda)}/N$ with $N$ a subquotient of $V^{H^1(\mn,\mV_\lambda)}$, which is always a module generated by highest weight vectors.

\end{proof}
In particular this states that for $\mathfrak{gl}(m|1)$, ordinary Verma modules, with a tensor module highest weight, possess this property. It is obvious that similar statements can be made for duals of tensor modules of $\mathfrak{gl}(m|n)$ and parabolics containing $\mathfrak{gl}(m)$.

\section*{Appendix: The Hopf superalgebra $\cU(\mg)$ and the twisted de Rham operator}

In this appendix we show how the operator $d:\cJ(C^k(\overline{\mn},\mV))\to \cJ(C^{k+1}(\overline{\mn},\mV))$ from Theorem \ref{thmdefop} is obtained from the same operator for the case $\mV=0$ by a $\mg$-module isomorphism between $\cJ(C^k(\overline{\mn},\mV))$ and $\cJ(\wedge^k{\mn})\otimes\mV$. In the second part of the appendix we prove that the operator $d$ for the case $\mV=0$ can be rewritten as a standard exterior derivative. Therefore we can interpret the operator $d:\cJ(C^k(\overline{\mn},\mV))\to \cJ(C^{k+1}(\overline{\mn},\mV))$ as a twisted de Rham operator. These results are necessary to prove the exactness of the operator $d$, which is equivalent with the exactness of the BGG complex as is proved in Section \ref{BGG}.

The universal enveloping algebra of a Lie superalgebra has the structure of a supercocommutative Hopf superalgebra, see \cite{MR0252485}. The comultiplication $\Delta:\cU(\mg)\to\cU(\mg)\otimes\cU(\mg)$, antipode $S:\cU(\mg)\to\cU(\mg)$, multiplication $m:\cU(\mg)\otimes\cU(\mg)\to\cU(\mg)$ and co-unit $\varepsilon:\cU(\mg)\to\mC$ are generated by
\begin{eqnarray*}
\Delta(A)=A\otimes 1+1\otimes A&&S(A)=-A\\
m(A\otimes B)=AB&&\varepsilon(A)=0
\end{eqnarray*}
for $A,B\in\mg$. Basic properties that we will need are
\begin{eqnarray*}
m\circ\left(S\otimes 1\right)\circ\Delta&=&\varepsilon\qquad\mbox{and}\\
(\Delta\otimes 1)\circ\Delta&=&(1\otimes \Delta)\circ\Delta.
\end{eqnarray*}
We use the Sweedler notation $\Delta (U)=\sum_j U^j_1\otimes U^j_2$ for $U\in\cU(\mg)$.

In order to describe the morphism structure between $\cJ(C^k(\overline{\mn},\mV))$ and $\cJ(\wedge^k{\mn})\otimes\mV$ properly we need some extra notations. For every $A\in\mg$ the action of $A^\mV$ on $C^k(\overline{\mn},\mV)$ is given by the action on $\mV$ and trivial action on $\wedge^k\mn$, only the gradation needs to be taken into account. This notation extends to $U^\mV$ for $U\in\cU(\mg)$. Likewise we define the action $A^\cJ$ on $C^k(\overline{\mn},\mV)$ as $A$ acting on $\wedge^k\mn$ and regarding $\mV$ as a sum of trivial representations. In particular $A^\mV+A^\cJ$ gives the ordinary action of $\mg$ on $C^k(\overline{\mn},\mV)$ in equation \eqref{gactionchain}. As discussed before, this is not a representation of $\mg$ on $C^k(\overline{\mn},\mV)$, it can only be identified with representations when restricted to the subalgebras $\Mp$ or $\Mp^\ast$. Note that the $\mg$-module structure of $\cJ(\wedge^k{\mn})\otimes\mV$ corresponds to the tensor product, i.e.
\begin{eqnarray*}
\left(A\beta\right)(U)&=&-(-1)^{|A||U|}\beta(AU)+(-1)^{|A||U|}A^\mV\beta(U)\qquad \mbox{for }A\in\mg,\quad \beta\in\cJ(\wedge^k{\mn})\otimes\mV\mbox{ and }U\in\cU(\mg).
\end{eqnarray*}

We define the $\mg$-module morphism $d^\cJ:\cJ(\wedge^k{\mn})\otimes\mV\to \cJ(\wedge^{k+1}{\mn})\otimes\mV$ to be the operator $d:\cJ(\wedge^k{\mn})\to \cJ(\wedge^{k+1}{\mn})$ as defined in Theorem \ref{thmdefop} in case $\mV=0$ which is extended trivially to $\cJ(\wedge^k{\mn})\otimes\mV$.

\begin{theorem}
\label{twisted}
The $\mg$-module morphism $\chi$ between $\cJ(C^k(\overline{\mn},\mV))$ and $\cJ(\wedge^k{\mn})\otimes\mV$ which sends $\alpha\in\cJ(C^k(\overline{\mn},\mV))$ to $\widetilde{\alpha}\in\cJ(\wedge^k{\mn})\otimes\mV$ defined as
\begin{eqnarray*}
\widetilde{\alpha}(U)=\sum_j \left(U^j_1\right)^\mV\left( \alpha(U^j_2)\right)
\end{eqnarray*}
is an isomorphism. Moreover it satisfies the property $d=\chi^{-1}\circ d^\cJ\circ\chi$ with $d^\cJ$ as defined above.
\end{theorem}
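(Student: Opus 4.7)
The plan is to exploit the Hopf-superalgebra structure of $\cU(\mg)$. First I would define the candidate inverse using the antipode,
\[
\chi^{-1}(\widetilde{\beta})(U) \;=\; \sum_j\bigl(S(U^j_1)\bigr)^\mV\bigl(\widetilde{\beta}(U^j_2)\bigr),
\]
and verify $\chi^{-1}\circ\chi=\mathrm{id}$ by combining coassociativity $(\Delta\otimes 1)\circ\Delta=(1\otimes\Delta)\circ\Delta$, the antipode axiom $m\circ(S\otimes 1)\circ\Delta=\varepsilon$, and the counit identity $\sum\varepsilon(U_{(1)})U_{(2)}=U$. In Sweedler notation the composition collapses to $\sum(S(U_{(1)})U_{(2)})^\mV\alpha(U_{(3)})=\sum\varepsilon(U_{(1)})\alpha(U_{(2)})=\alpha(U)$, and $\chi\circ\chi^{-1}=\mathrm{id}$ is analogous.

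Next I would verify that $\chi(\alpha)$ truly lies in $\cJ(\wedge^k\mn)\otimes\mV$, i.e.\ that it satisfies $\widetilde{\alpha}(UZ)=-(-1)^{|U||Z|}Z^\cJ\widetilde{\alpha}(U)$ for $Z\in\Mp$, and that $\chi$ is $\mg$-equivariant. Both are direct Hopf computations using $\Delta(UZ)=\Delta(U)\Delta(Z)$ in the graded tensor product, the primitivity $\Delta(A)=A\otimes 1+1\otimes A$ for $A\in\mg$, the $\Mp$-equivariance of $\alpha$, and the splitting $Z\cdot=Z^\cJ+Z^\mV$ of the action on $\wedge^k\mn\otimes\mV$. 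In each case the mixed $(U_{(1)}Z)^\mV$-type terms cancel pairwise, leaving the required identity and reproducing the tensor-product action on $\cJ(\wedge^k\mn)\otimes\mV$ stated just before the theorem.

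For the intertwining $\chi\circ d=d^\cJ\circ\chi$, decompose $d=\partial_0+\partial_\mV+T$ and $d^\cJ=\partial_0+T$, where $\partial_0$ is the piece of $\partial$ (from Lemma \ref{propstandder}) depending only on $\wedge^k\mn$, $\partial_\mV$ is the remaining piece originating from the base case $\partial v=\sum_a X_a\otimes X_a^\dagger v$, and $T\alpha(U)=(k+1)\sum_a(-1)^{|U||X_a|}X_a\wedge\alpha(UX_a^\dagger)$ has the same form in both operators. Since $\partial_0$ acts trivially on $\mV$, it commutes with every $(U_{(1)})^\mV$ and therefore intertwines through $\chi$ automatically, so the remaining content is $\chi(\partial_\mV\alpha+T\alpha)=T\chi(\alpha)$. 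Expanding $T\chi(\alpha)(U)$ via
\[
\Delta(UX_a^\dagger)\;=\;\sum(-1)^{|U_{(2)}||X_a|}U_{(1)}X_a^\dagger\otimes U_{(2)}\,+\,\sum U_{(1)}\otimes U_{(2)}X_a^\dagger
\]
splits it into two pieces: the second summand reproduces $\chi(T\alpha)(U)$ exactly, and the first, after moving $(U_{(1)})^\mV$ past $X_a$, collapses to $(k+1)\sum(U_{(1)})^\mV\bigl[\sum_a X_a\wedge X_a^\dagger\cdot\alpha(U_{(2)})\bigr]$. A short induction on $k$ using the recursion in Lemma \ref{propstandder} shows $(k+1)\sum_a X_a\wedge X_a^\dagger\cdot\xi=\partial_\mV\xi$ for every $\xi\in\wedge^k\mn\otimes\mV$, which matches this term with $\chi(\partial_\mV\alpha)(U)$.

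The main obstacle will be the careful sign bookkeeping in this last step: the super-wedge, the multiplication in $\cU(\mg)$, and the $\mV$-action each contribute parity signs, and the dual-basis property from conditions \eqref{neccond} --- embodied in identity \eqref{propdualbasis} --- is the combinatorial hinge that makes the two sums $\sum_a X_a\otimes(\cdot)X_a^\dagger$ on one side and $\sum_a X_a\wedge(X_a^\dagger\cdot)$ on the other fit together. Once these signs are verified, the four steps above assemble into the claimed $\mg$-module isomorphism and intertwining property.
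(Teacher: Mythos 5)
Your plan matches the paper's proof essentially step-for-step: the inverse via the antipode and Hopf axioms, the direct verification of $\mg$-equivariance and of the image landing in $\cJ(\wedge^k\mn)\otimes\mV$, and the intertwining via the decomposition $\partial=\partial^\cJ+(k+1)\sum_aX_a\wedge(X_a^\dagger)^\mV$ from Lemma \ref{propstandder} combined with the coproduct expansion of $UX_a^\dagger$. You spell out the Sweedler/sign bookkeeping in more detail than the paper does, but the route is the same.
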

\begin{proof}
The calculation
\begin{eqnarray*}
(A\widetilde{\alpha})(U)&=&(-1)^{|A||U|}A^\mV \widetilde{\alpha}(U)-\sum_j(-1)^{|A||U|}A^\mV \left(U_1^j\right)^\mV{\alpha}(U^j_2)-\sum_j(-1)^{|A||U|+|A||U^j_1|} \left(U_1^j\right)^\mV{\alpha}(AU^j_2)\\
&=&-\sum_j(-1)^{|U^j_2||A|}\left(U^j_1\right)^{\mV}\alpha(A U^j_2)=\widetilde{A\alpha}(U)
\end{eqnarray*}
shows that the linear map $\chi$ is a $\mg$-module morphism. The calculation
\begin{eqnarray*}
\widetilde{\alpha}(UZ)&=&\sum_j(-1)^{|U_2^j||Z|}\left(U_1^j\right)^\mV Z^\mV\alpha(U_2^j)-\sum_j(-1)^{|U_2^j||Z|}\left(U_1^j\right)^\mV Z \left(\alpha(U_2^j)\right)\\
&=&-\sum_j(-1)^{|U||Z|}Z^\cJ\left(U_1^j\right)^\mV  \left(\alpha(U_2^j)\right)=-(-1)^{|U||Z|}Z^\cJ\left(\widetilde{\alpha}(U)\right)
\end{eqnarray*}
for $Z\in\Mp$ shows that the image of $\chi$ is inside $\cJ(\wedge^k{\mn})\otimes\mV$.

The inverse of $\chi$ is defined as $\chi^{-1}(\beta)(U)=\sum_j S(U^j_1)^\mV\beta(U^j_2)$ for $\beta\in\cJ(\wedge^k{\mn})\otimes\mV$ and $U\in\cU(\mg)$. The proof that this is the inverse follows immediately from the relation
\begin{eqnarray*}
(m\otimes 1)\circ(S\otimes \Delta)\circ\Delta=(m\otimes 1)\circ((S\otimes1)\circ \Delta\otimes 1)\circ\Delta=(\varepsilon\otimes 1)\circ\Delta=1.
\end{eqnarray*}

Also the fact that $d=\chi^{-1}\circ d^\cJ\circ\chi$ holds follows from a direct calculation and the relation
\begin{eqnarray*}
\partial f&=&\partial^\cJ f+(k+1)X_a\wedge (X_a^\dagger)^\mV f
\end{eqnarray*}
for $f\in C^k(\overline{\mn},\mV)$ and $\partial^\cJ$ the standard derivative on $C^k(\overline{\mn},0)=\Lambda^k\mn$ trivially extended to $C^k(\overline{\mn},\mV)$, which follows from the proof of Lemma \ref{propstandder}.
\end{proof}

The vector spaces $\cJ(\Lambda^k\mn)=\mbox{Hom}_{\cU(\Mp)}(\cU(\mg),\Lambda^k\mn)$ are naturally isomorphic to Hom$(\cU(\overline{\mn}),\Lambda^k\mn)$ by the Poincar\'e-Birkhoff-Witt theorem and the operator $d$ from Theorem \ref{thmdefop} for $\mV=0$ remains identically defined under this identification. The space Hom$(\cU(\overline{\mn}),\mC)$ becomes an algebra with multiplication defined by $(\alpha\beta)(U)=\sum_j\alpha(U^j_1)\beta(U^j_2)$ with $\alpha,\beta\in$Hom$(\cU(\overline{\mn}),\mC)$ and $U\in\cU(\overline{\mn})$, where now we consider the Hopf superalgebra $\cU(\overline{\mn})$. This multiplication extends trivially to the case where $\alpha\in$Hom$(\cU(\overline{\mn}),\mC)$ and $\beta\in$Hom$(\cU(\overline{\mn}),\Lambda^k\mn)$. Then the operator $d$ can be rewritten as in the following theorem.
\begin{theorem}
\label{extder}
For $d_k:$ {\rm Hom$(\cU(\overline{\mn}),\Lambda^k\mn)\to$ Hom$(\cU(\overline{\mn}),\Lambda^{k+1}\mn)$} induced from the operator $d$ in Theorem \ref{thmdefop}, there are elements $\theta_a\in$ {\rm Hom}$(\cU(\overline{\mn}),\mn)$ such that $\theta_a(1)=X_a$ and 
\begin{eqnarray*}
d_k \circ\left(\theta_a\wedge\right)&=&\left(\theta_a\wedge\right)\circ d_{k-1}
\end{eqnarray*}
holds and $d_0=\sum_a\theta_a\partial_{x_a}$ with $\partial_{x_a}$ supercommuting endomorphisms on {\rm Hom}$(\cU(\overline{\mn}),\mC)$ satisfying the Leibniz rule.
\end{theorem}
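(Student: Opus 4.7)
The plan is to exploit the supercocommutative Hopf superalgebra structure of $\cU(\overline{\mn})$ together with the Poincar\'e--Birkhoff--Witt theorem to identify $\mbox{Hom}(\cU(\overline{\mn}), \Lambda^\bullet\mn)$ with the de Rham complex of a formal flat supermanifold, under which the operator $d$ of Theorem \ref{thmdefop} becomes the standard exterior derivative. All three conclusions of the theorem will then reduce to familiar facts of this flat de Rham calculus.

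Since $\cU(\overline{\mn})$ is a supercocommutative Hopf superalgebra, the linear dual $\mbox{Hom}(\cU(\overline{\mn}), \mC)$ inherits a supercommutative algebra structure from the convolution product $(fg)(U) = \sum_j f(U^j_1)\,g(U^j_2)$. The PBW symmetrization $\mathrm{sym}: S(\overline{\mn}) \to \cU(\overline{\mn})$ is an isomorphism of supercoalgebras, and its dualization, combined with the bilinear form of \eqref{neccond} used to identify $\overline{\mn}^\ast$ with $\mn$, yields an isomorphism of supercommutative algebras $\mbox{Hom}(\cU(\overline{\mn}), \mC) \cong \hat S(\mn) = \mC[[x_a]]$, where $\{x_a\}$ are the coordinate functions dual to $\{X_a^\dagger\}$. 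Tensoring with $\Lambda^k\mn$ on the right produces $\mbox{Hom}(\cU(\overline{\mn}), \Lambda^k\mn) \cong \mC[[x_a]] \otimes \Lambda^k\mn$, which I interpret as the space of $k$-forms on the corresponding formal flat supermanifold.

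Under this identification I define $\theta_a$ to correspond to the coordinate 1-form $dx_a$ (so $\theta_a$ matches $1\otimes X_a$ in $\mC[[x_a]]\otimes\mn$) and $\partial_{x_a}$ to be the standard partial derivatives on $\mC[[x_a]]$, extended trivially to $\mC[[x_a]]\otimes\Lambda^k\mn$. Then $\theta_a(1)=X_a$, the $\partial_{x_a}$ supercommute and satisfy the Leibniz rule by construction, and both the closedness $d(dx_a)=0$ (implying property (2)) and the identity $d_0 = \sum_a dx_a\,\partial_{x_a}$ (property (3)) become standard. The theorem therefore reduces to showing that $d$ of Theorem \ref{thmdefop}, transported to $\mC[[x_a]]\otimes\Lambda^\bullet\mn$ via the PBW identification, coincides with the formal exterior derivative.

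To verify this coincidence, I would show that both operators are graded superderivations of degree one agreeing on a set of generators. For the formal exterior derivative this is classical; for $d$ of Theorem \ref{thmdefop} the derivation property is obtained from its explicit formula combined with the Sweedler identity $\Delta(UX_a^\dagger) = \sum_j U^j_1 X_a^\dagger \otimes U^j_2 + (-1)^{|X_a||U^j_2|} U^j_1 \otimes U^j_2 X_a^\dagger$ and identity \eqref{propdualbasis}. Once both are derivations it suffices to check agreement on the generators $1$ and $x_b$ of $\mC[[x_a]]$: the first gives zero on both sides, and for the second a direct evaluation on PBW basis monomials yields the constant 1-form $X_b$, which matches $dx_b$. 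The main obstacle lies precisely in the derivation property of $d$: establishing it requires carefully combining the bracket contributions from $\partial$ (via Lemma \ref{propstandder}) with the second term in the definition of $d$, so that the non-abelian corrections cancel via \eqref{propdualbasis} and the supercocommutativity of the coproduct.
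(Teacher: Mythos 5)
The central claim of your proposal---that under the PBW identification the operator $d$ of Theorem \ref{thmdefop} becomes the \emph{standard} exterior derivative, so that the $\theta_a$ can be taken to be the constant coframe $1\otimes X_a$---fails whenever $\overline{\mn}$ is non-abelian. For $\mV=0$ and $k=0$ one has $d_0=\sum_a X_a\otimes\partial_{X_a^\dagger}$ with $(\partial_{X_a^\dagger}\alpha)(U)=(-1)^{|U||X_a|}\alpha(UX_a^\dagger)$. Each $\partial_{X_a^\dagger}$ is indeed a derivation of the convolution algebra (since $X_a^\dagger$ is primitive), but these derivations obey $[\partial_{X_a^\dagger},\partial_{X_b^\dagger}]=\pm\,\partial_{[X_a^\dagger,X_b^\dagger]}$, i.e. they realize a copy of $\overline{\mn}$ rather than an abelian algebra, so they cannot all coincide with the coordinate partials $\partial_{x_a}$ in any chart. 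Consequently your verification ``a direct evaluation on PBW basis monomials yields the constant 1-form $X_b$'' is wrong: $\theta_b=d_0(x_b)$ has the required value $X_b$ only at $U=1$, while evaluating at $U=X_c^\dagger$ (with $c$ not respecting the PBW order) produces terms in $x_b([X_c^\dagger,X_a^\dagger])$ that do not vanish. The ``cancellation of non-abelian corrections'' via \eqref{propdualbasis} that you invoke is what gives $d\circ d=0$ (a Maurer--Cartan-type identity), not what flattens $d$.

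The paper's proof sidesteps this by not identifying $d$ with the flat differential at all. It only uses that each $\partial_{X_a^\dagger}$ is a derivation, hence admits an expansion $\partial_{X_a^\dagger}=\sum_b f_{ab}\,\partial_{x_b}$ with \emph{non-constant} coefficients $f_{ab}$ in some auxiliary supercommuting frame $\{\partial_{x_b}\}$, and then defines $\theta_b=\sum_a X_a f_{ab}=d_0(x_b)$. The relations $d_1(\theta_a)=0$ (from $d^2=0$) and the super-Leibniz identity for $d$ then give the wedge-commutation property directly, establishing that $d$ is \emph{equivalent in cohomology} to the flat de Rham differential rather than equal to it. This is precisely why the remark preceding Theorem \ref{extder} singles out the case $\mg_{\overline{0}}\subset\Mp$ (where $\mn$ is supercommutative) as the one where your constant-coframe picture is actually correct. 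Your overall derivation-plus-generators strategy could be salvaged by replacing the asserted identity $\theta_a=1\otimes X_a$ with $\theta_a:=d_0(x_a)$ and dropping the claim that $d$ is the flat exterior derivative.
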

It is clear that for the example of $\mg=\mathfrak{gl}(m|n)$ and the case where $\mg_{\overline{0}}=\mathfrak{gl}(m)\oplus\mathfrak{gl}(n)$ is contained in the parabolic subalgebra $\Mp$, this property is immediate since then the radical $\mn$ is a supercommutative Lie superalgebra.
\begin{proof}
As a vector space the isomorphism Hom$(\cU(\overline{\mn}),\mC)\cong S(\mn)$ holds, with $S(\mn)=\oplus_{j=0}^\infty S^j(\mn)$ the supersymmetric tensor powers of $\mn$. It then follows that all endomorphisms on Hom$(\cU(\overline{\mn}),\mC)$ satisfying the Leibniz rule can be written in terms of a commuting basis $\{\partial_{x_b}\}$, in an expansion with Hom$(\cU(\overline{\mn}),\mC)$-valued coefficients. Therefore the operations $\partial_{X^\dagger_a}$ defined as
\begin{eqnarray*}
\left(\partial_{X^\dagger_a}\alpha\right)(U)&=&(-1)^{|U||X_a|}\alpha(UX^\dagger_a)
\end{eqnarray*}
can be expanded as $\partial_{X^\dagger_a}=\sum_b f_{ab}\partial_{x_b}$ for $f_{ab}\in$ Hom$(\cU(\overline{\mn}),\mC)$. Then the elements $\theta_b$ of Hom$(\cU(\overline{\mn}),\mn)$ are defined by $\theta_b=\sum_a X_a f_{ab}$. By the fact that $d_1\circ d_0=0$, see proof of Theorem \ref{Rhamcomplex}, it follows that $d_1(\theta_a)=0$ since $\theta_a=d_0(x_a)$ with $x_a\in$Hom$(\cU(\overline{\mn}),\mC)$ canonically defined by the operators $\{\partial_{x_b}\}$. Then it follows easily that $d_k(\theta_a\wedge\alpha)=d_1(\theta_a)\wedge\alpha-\theta_a\wedge d_{k-1}(\alpha)$, which concludes the proof.
\end{proof}



\end{document}